\theoremstyle{plain}
\newtheorem{definition}{Definition}[section]
\newtheorem{lemma}{Lemma}[section]
\newtheorem{theorem}{Theorem}[section]
\theoremstyle{remark}
\newtheorem{remark}{Remark}[section]
\newcommand{\vertiii}[1]{{\left\vert\kern-0.25ex\left\vert\kern-0.25ex\left\vert #1 \right\vert\kern-0.25ex\right\vert\kern-0.25ex\right\vert}}
\begin{document}
	
	\pagestyle{headings}
	
	\title[A Convergent CNG Scheme for the BO Equation]{A Convergent Crank--Nicolson Galerkin Scheme for the Benjamin--Ono Equation}
	
	\author{Sondre Tesdal Galtung}
	
	\email{sondre.galtung@ntnu.no}
	
	\address{Department of Mathematical Sciences, NTNU Norwegian University of Science and Technology, NO-7491 Trondheim, Norway}
	
	\date{October 11, 2017}
	
	\subjclass[2010]{Primary: 35Q53, 65M60; Secondary: 35Q51, 65M12.}
	
	\keywords{Benjamin--Ono equation, compactness, convergence, Crank--Nicolson, finite element method.}
	
	\begin{abstract}
		In this paper we prove the convergence of a Crank--Nicolson type Galerkin finite element scheme for the initial value problem associated to the Benjamin--Ono equation.
		The proof is based on a recent result for a similar discrete scheme for the Korteweg--de Vries equation and utilizes a local smoothing effect to bound the $H^{1/2}$-norm of the approximations locally.
		This enables us to show that the scheme converges strongly in $L^{2}(0,T;L^{2}_{\text{loc}}(\mathbb{R}))$ to a weak solution of the equation for initial data in $L^{2}(\mathbb{R})$ and some $T > 0$.
		Finally we illustrate the method with some numerical examples.
	\end{abstract}
	
	\maketitle
	
	\section{Introduction}
	\label{introduction}
	In this paper we consider a fully discrete Crank--Nicolson Galerkin scheme for the Cauchy problem associated to the Benjamin--Ono (BO) equation, which reads
	\begin{equation}
	\begin{cases}
	u_{t}+\left(\frac{u^2}{2}\right)_{x}-\mathcal{H}u_{xx} = 0, & (x,t) \in \mathbb{R} \times (0, T), \\
	u(x,0) = u_{0}(x), & x \in \mathbb{R},
	\end{cases}
	\label{BO}
	\end{equation}
	where $T > 0$ is fixed, $u : \mathbb{R} \times [0,T) \to \mathbb{R}$ is the unknown, $u_{0}$ is the initial data and $\mathcal{H}$ is the Hilbert transform defined by
	\begin{equation*}
	\mathcal{H}u(x,\cdot) := \text{p.v.} \frac{1}{\pi} \int_{\mathbb{R}} \frac{u(x-y,\cdot)}{y}\,dy,
	\end{equation*}
	where p.v. denotes the Cauchy principal value.
	
	The equation was derived independently by Benjamin \cite{Ben67} and Ono \cite{Ono75}, and serves as a model equation for weakly nonlinear long waves with weak nonlocal dispersion.
	There has been done much work on the well-posedness of \eqref{BO} and improvement of regularity restrictions on the initial data, we mention \cite{Iorio86},\cite{Abdelouhab1989} and \cite{Tao04}, where in the latter global well-posedness for initial data in $H^{s}$ for $s \ge 1$ was proved using a gauge transform resembling the famous Cole--Hopf transform for the viscous Burgers equation.
	By refining this transform, \cite{Ionescu07} extended the result to $s \ge 0$.
	
	The BO equation is formally completely integrable \cite{Ablowitz83, Kaup1998}, a property shared by the well-known Korteweg--de Vries (KdV) equation. The integrability is closely related to the fact that the BO equation admits an infinite number of conserved quantities \cite{Abdelouhab1989, Kaup1998} and a Lax pair \cite{Bock1979}, hence it can be formulated as a Hamiltonian system.
	Another feature of the BO equation is the existence of families of explicit solutions called solitons \cite{Albert1987}, and these localized, solitary wave solutions are a consequence of a delicate balance between dispersion and nonlinear convection.
	Having to approximate the simultaneous appearance of the two aforementioned effects makes the task of finding reliable numerical methods for the BO equation rather challenging, and we mention some of the various methods which have been proposed.
	In \cite{Thomee1998} the authors present a finite difference scheme for which they prove error estimates for smooth solutions of the periodic version of the BO equation, while in \cite{Dutta2015operator} they consider operator splitting methods of Godunov and Strang type for \eqref{BO} and prove corresponding $L^{2}$-convergence rates given sufficiently regular initial data. On the other hand, in \cite{Pelloni2000, Deng2009} the authors consider Fourier spectral methods and prove error estimates for sufficiently regular solutions of the periodic equation. A high order hybrid finite element-spectral scheme for the Benjamin equation, for which the BO equation is a special case, is presented in \cite{Dougalis2015} where the authors give experimental convergence rates for the method.
	
	However, we emphasize that the goal of this paper is \emph{not} to present an efficient, high-order numerical method, but rather to formulate a discrete scheme for which one is able to prove existence of a sequence of approximations that converge locally to a weak solution of the BO equation for low regularity initial data, namely functions in $L^{2}(\mathbb{R})$.
	Thus, the scheme differs from the previously mentioned papers in that it can be used as a constructive proof of the existence of solutions to \eqref{BO}.
	In this respect, the paper at hand is more in the spirit of \cite{Dutta2015difference} where the authors present finite difference schemes for \eqref{BO} and its periodic counterpart which are proved to converge to to classical solutions given initial data in $H^{2}$.
	
	We will here consider a Crank--Nicolson type Galerkin scheme for finding weak solutions to the BO equation based on a method for the KdV equation due to Dutta and Risebro \cite{Dutta2016} which can be seen as a generalization to higher order temporal approximations of \cite{Koley2015}, where the implicit Euler method is used instead of Crank--Nicolson for the temporal discretization.
	The motivation for using these rather simple time integrators is that they are easier to analyze compared to multi-step integrators such as higher order Runge--Kutta methods.
	Our strategy for establishing convergence will follow closely that of the above papers, in particular by using a local smoothing effect inherent to the equation, but some more work is required here to treat the dispersive term which contrary to the case of KdV is nonlocal for the BO equation.
	In fact, it is exactly this added challenging aspect of the nonlocal dispersion which is our main motivation for applying the methods of \cite{Koley2015, Dutta2016} to \eqref{BO}.
	The smoothing effect of the BO equation is also weaker than that of KdV, which combined with the nonlocal nature of the Hilbert transform makes it natural to consider fractional Sobolev spaces, hence making our estimates more involved than in the case of KdV.
	We note that the results presented in the current paper is in full based on \cite{master2016}.
	For a treatment of convergence rates for the scheme discussed here given sufficiently regular initial data the reader is referred to \cite{proceedings2016}.
	
	The paper is structured as follows. In Section \ref{preliminaries} we establish some preliminary technical results, e.g.\ the local smoothing effect mentioned above for a semidiscretized weak formulation of \eqref{BO}.
	The fully discrete scheme is presented in Section \ref{scheme}, where we prove existence and uniqueness of its solutions for each time step.
	The main part of the paper is contained in Section \ref{convergence}, where we first prove that the solutions of the fully discrete scheme in Section \ref{scheme} exhibit the local smoothing effect from Section \ref{preliminaries}. Then we move on to our main result, Theorem \ref{theorem_convergence}, which establishes the existence of a sequence of approximate functions which converge locally in $L^{2}$ to a weak solution of \eqref{BO}.
	Finally, in Section \ref{experiments} some numerical experiments are presented to illustrate the discrete scheme.
	
	\section{Preliminary estimates}
	\label{preliminaries}
	In the following we will give a brief explanation of our strategy.
	Let us momentarily define a weak solution to the Benjamin--Ono equation \eqref{BO} to be a function $u(x,t)$ such that $u \in C^{1}\left(\left[0,\infty\right);H^{2}(\mathbb{R})\right)$ which for all $v \in H^{2}(\mathbb{R})$ satisfies
	\begin{equation}
	\left< u_{t},v \right> + \left< \left(\frac{u^2}{2}\right)_{x}, v \right> + \left< \mathcal{H}u_{x}, v_{x} \right> = 0,
	\label{weak_1}
	\end{equation}
	where $\left< \cdot, \cdot \right>$ denotes the standard $L^{2}$-inner product.
	We now discretize the equation in time using a Crank--Nicolson method. Let $\Delta t$ be the time step size, $u^{n} \approx u(\cdot, n\Delta t)$ and $u^{n+1/2} :=  (u^{n+1}+u^{n})/2$. For a given $u^{0} \in L^{2}(\mathbb{R})$, define $u^{n}$ to be the solution of
	\begin{equation}
	\left< u^{n+1},v \right> + \Delta t \left< \left(\frac{\left(u^{n+1/2}\right)^2}{2}\right)_{x}, v \right> + \Delta t \left< \mathcal{H}\left(u^{n+1/2}\right)_{x}, v_{x} \right> = \left< u^{n}, v \right>,
	\label{CN_eqn}
	\end{equation}
	for all $v \in H^{2}(\mathbb{R})$ and $n \ge 0$.
	Assuming that the above equation has a unique solution $u^{n+1}$ we may choose $v = u^{n+1}+u^{n}$ in \eqref{CN_eqn} which yields
	\begin{equation}
	\|u^{n+1}\|_{L^{2}(\mathbb{R})} = \|u^{n}\|_{L^{2}(\mathbb{R})} = \|u^{0}\|_{L^{2}(\mathbb{R})}.
	\label{bound_L2_1}
	\end{equation}
	Here we have used that the Hilbert transform is antisymmetric, which is a consequence of the following lemma.
	\begin{lemma}
		\label{Hilbert_props}
		The Hilbert transform is a linear operator with the following properties:
		\begin{enumerate}[(i)]
			\item \emph{(Skew-symmetricity)} Assume $f \in L^{p}(\mathbb{R})$ for $1 < p < \infty$ and $g \in L^{q}(\mathbb{R})$ where $1/p + 1/q = 1$. Then we have
			\begin{equation*}
			\left< \mathcal{H}f, g \right> = -\left< f, \mathcal{H}g \right>.
			\end{equation*}
			\item \emph{(Commutes with differentiation)} For a differentiable function $f$ we have
			\begin{equation*}
			(\mathcal{H}f)_{x} = \mathcal{H}f_{x}.
			\end{equation*}
			\item \emph{($L^{2}$-isometry)} The transform preserves the $L^{2}$-norm,
			\begin{equation*}
			\|\mathcal{H}f\|_{L^{2}(\mathbb{R})} = \|f\|_{L^{2}(\mathbb{R})}.
			\end{equation*}
		\end{enumerate}
	\end{lemma}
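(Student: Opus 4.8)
The plan is to derive all three statements from the single Fourier–multiplier identity
$\widehat{\mathcal{H}f}(\xi) = -i\,\sgn(\xi)\,\hat f(\xi)$,
supplementing it with a density argument in order to reach the full $L^{p}$–$L^{q}$ generality asked for in part (i). First I would establish this multiplier identity for $f$ in the Schwartz class: since $\mathcal{H}f = \tfrac{1}{\pi}\,(\mathrm{p.v.}\,\tfrac1y) * f$, and the tempered distribution $\mathrm{p.v.}\,\tfrac{1}{\pi y}$ has Fourier transform $-i\,\sgn(\xi)$, convolution turns into multiplication. Everything else follows from this.

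Part (iii) is then immediate by Plancherel: for $f$ Schwartz, $\|\mathcal{H}f\|_{L^{2}}^{2} = \|\widehat{\mathcal{H}f}\|_{L^{2}}^{2} = \int_{\mathbb{R}} |{-i\,\sgn(\xi)}|^{2}\,|\hat f(\xi)|^{2}\,d\xi = \|\hat f\|_{L^{2}}^{2} = \|f\|_{L^{2}}^{2}$, because $|{-i\,\sgn(\xi)}| = 1$ for a.e.\ $\xi$; the identity then extends to all $f \in L^{2}(\mathbb{R})$ by density. For part (ii), either one differentiates under the principal-value integral sign (legitimate for a sufficiently regular $f$, writing $\mathrm{p.v.}$ as the $\varepsilon\to0^{+}$ limit of $\int_{|y|>\varepsilon}$ and using dominated convergence on the difference quotients), or, on the Fourier side, one checks $\widehat{(\mathcal{H}f)_{x}}(\xi) = i\xi\,\widehat{\mathcal{H}f}(\xi) = i\xi\,(-i\,\sgn(\xi))\,\hat f(\xi) = |\xi|\,\hat f(\xi) = -i\,\sgn(\xi)\,(i\xi\,\hat f(\xi)) = \widehat{\mathcal{H}f_{x}}(\xi)$ and concludes by injectivity of the Fourier transform.

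For part (i), the Fourier computation gives skew-symmetry directly on $L^{2}$: for (real-valued) $f,g\in L^{2}(\mathbb{R})$, Plancherel and $\overline{-i\,\sgn(\xi)} = i\,\sgn(\xi)$ yield $\langle \mathcal{H}f,g\rangle = \langle -i\,\sgn(\xi)\hat f,\hat g\rangle = \langle \hat f, i\,\sgn(\xi)\hat g\rangle = -\langle \hat f, \widehat{\mathcal{H}g}\rangle = -\langle f,\mathcal{H}g\rangle$. To obtain the stated $L^{p}$–$L^{q}$ version I would invoke the M.\ Riesz theorem, which guarantees that $\mathcal{H}$ is bounded on $L^{r}(\mathbb{R})$ for every $1<r<\infty$; then the bilinear form $(f,g)\mapsto \langle\mathcal{H}f,g\rangle + \langle f,\mathcal{H}g\rangle$ is continuous on $L^{p}\times L^{q}$ by Hölder's inequality, it vanishes on the dense set of Schwartz pairs by the computation above, hence it vanishes identically. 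Alternatively, one can give a purely real-variable proof: approximate $\mathrm{p.v.}$ by $\int_{|y|>\varepsilon}$, apply Fubini to the resulting absolutely convergent double integral $\iint_{|x-z|>\varepsilon} \tfrac{f(z)g(x)}{x-z}\,dz\,dx$, exploit the oddness $\tfrac{1}{x-z} = -\tfrac{1}{z-x}$, and let $\varepsilon\to0^{+}$.

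The main obstacle is exactly this $L^{p}$ case of (i): the $L^{2}$ version, and hence (ii) and (iii), are essentially bookkeeping once the multiplier identity is recorded, but full-generality skew-symmetry genuinely needs either the nontrivial $L^{p}$-boundedness of $\mathcal{H}$ (to run the density argument) or the careful truncation-and-Fubini argument indicated above; in a paper of this type it would be reasonable to simply cite a standard reference for these facts rather than reproduce the proof in detail.
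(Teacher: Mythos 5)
The paper does not prove this lemma at all: it simply refers the reader to Grafakos \cite[p.~317]{FourierAnalysis}, so there is no in-paper argument to compare against. Your proposal is correct and is precisely the standard proof one finds in that reference: derive the multiplier identity $\widehat{\mathcal{H}f}(\xi)=-i\sgn(\xi)\hat f(\xi)$ on the Schwartz class, get (iii) from Plancherel, (ii) from the symbol computation $i\xi\cdot(-i\sgn\xi)=|\xi|=(-i\sgn\xi)\cdot i\xi$, and (i) on $L^2$ from $\overline{-i\sgn\xi}=i\sgn\xi$ (taking $f,g$ real so that the paper's bilinear pairing agrees with the Hermitian one), then extend (i) to $L^p\times L^q$ by density using the M.~Riesz theorem. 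The only soft spot is the parenthetical alternative for (i): the claim that the truncated double integral $\iint_{|x-z|>\varepsilon}|f(z)||g(x)|\,|x-z|^{-1}\,dz\,dx$ is absolutely convergent for general $f\in L^p$, $g\in L^q$ is not immediate (the truncated kernel is in $L^s$ only for $s>1$, so Young's inequality just misses the exponent needed for H\"older against $g$), and would need either a justification or a restriction to a dense class of compactly supported functions; since your primary route through M.~Riesz and density is complete, this does not affect the correctness of the proof.
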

	\begin{remark}
		Setting $p = 2$ and $g = f$ in \emph{(i)} in Lemma \ref{Hilbert_props} gives $\left< \mathcal{H}f, f \right> = 0$, which is the aforementioned antisymmetry property.
		Also, combining \emph{(ii)} and \emph{(iii)} shows that the transform is in fact isometric for all Sobolev norms $H^{k}$ with $k \in \{0\} \cup \mathbb{N}$.
	\end{remark}
	These properties will be essential in our convergence analysis and for a proof of the above lemma we refer to \cite[p.\ 317]{FourierAnalysis}.
	Note that throughout this paper $C$ will denote various positive constants which exact value is of no importance to the arguments.
	Likewise, $C(R)$ will denote various positive constants depending on the parameter $R$ and so on.
	
	Now, in \cite{Dutta2016} they introduce a smooth, positive and non-decreasing cut-off function $\varphi$ and use integration by parts to derive a local smoothing effect which bounds $u^{n}$ locally in $H^{1}$-norm.
	This technique is originally due to Kato \cite{Kato1983}, and is in fact a consequence of the commutator identity
	\begin{equation*}
	[ -\partial^{3}_{x}, \varphi ] = -3 \partial_{x} \varphi_{x} \partial_{x} - \varphi_{xxx},
	\end{equation*}
	where for two operators $A$ and $B$ we introduce the commutator bracket $[\cdot,\cdot]$ as $[A,B] := AB-BA$.
	Such identities have been generalized for the hierarchy of generalized Benjamin--Ono equations by Ginibre and Velo \cite{Ginibre1989, Ginibre1991}, and the identity relevant for us is found in \cite[p.\ 227]{Ginibre1989} and reads
	\begin{equation}
	-[\mathcal{H}(-\partial_{xx}), \varphi] = 2 D^{1/2} \varphi_{x} D^{1/2} + R_{1/2}(\varphi),
	\label{commutator_1}
	\end{equation}
	where $D^{\beta}$, $\beta > 0$ denotes the homogeneous fractional derivative defined by
	\begin{equation*}
	D^{\beta} f(x) = (-\partial_{xx})^{\beta/2} f(x) := \mathcal{F}^{-1}[|\xi|^{\beta} \hat{f}(\xi)](x),
	\end{equation*}
	and $R_{1/2}(\varphi)$ is some remainder operator.
	Here and in the following we use standard notation for the Fourier transform, defined by
	\begin{equation*}
	\mathcal{F}[f](\xi) = \hat{f}(\xi) := \frac{1}{\sqrt{2\pi}} \int_{\mathbb{R}} f(x) e^{-ix\xi}\,dx
	\end{equation*}
	for a suitable function $f$.
	Note the sign change on the left-hand side of \eqref{commutator_1} compared to that of Ginibre and Velo which is due to their use of the Hilbert transform with opposite sign.
	According to equation (40) in \cite[p.\ 228]{Ginibre1989} we have
	\begin{equation}
	\vertiii{R_{1/2}(\varphi)} \le \frac{1}{\sqrt{2\pi}} \|\widehat{D^{1}\varphi_{x}}\|_{L^{1}(\mathbb{R})} = \frac{1}{\sqrt{2\pi}} \|\widehat{\varphi_{xx}}\|_{L^{1}(\mathbb{R})},
	\label{commutator_est_1}
	\end{equation}
	where $\vertiii{\cdot}$ denotes the operator norm in $L^{2}(\mathbb{R})$.
	Using \eqref{commutator_1} and \eqref{commutator_est_1} we are able to bound the $H^{1/2}$-norm of $u^{n}$ locally, where $H^{s}(\mathbb{R})$ is the real ordered Sobolev space of functions $u$ such that $(1+|\xi|^{2})^{s/2}\hat{u} \in L^{2}(\mathbb{R})$ with the corresponding norm $\|u\|_{H^{s}(\mathbb{R})} = \|(1+|\xi|^{2})^{s/2}\hat{u}\|_{L^{2}(\mathbb{R})}$.
	
	Let us define a smooth cut-off function $\varphi \in C^{\infty}(\mathbb{R})$ satisfying:
	\begin{enumerate}[(a)]
		\item $1 \le \varphi(x) \le 2 + 2R$,
		\item $\varphi_{x}(x) = 1$ for $|x| < R$,
		\item $\varphi_{x}(x) = 0$ for $|x| \ge R+1$,
		\item $0 \le \varphi_{x}(x) \le 1$ for $x \in \mathbb{R}$ and
		\item $\sqrt{\varphi_{x}} \in C^{\infty}_{c}(\mathbb{R})$.
	\end{enumerate}
	Properties (a)--(d) are easily achievable by standard mollifier methods. The existence of functions satisfying property (e) can be motivated as follows. If this were not the case we could have started by defining a nonnegative function $h$ such that $h(x) = 1$ for $|x| < R$, $h(x) = 0$ for $|x| \ge R+1$ and $0 \le h(x) \le 1$. Then $h^{2} \in C^{\infty}_{c}(\mathbb{R})$ has the same properties as $h$ and by defining $\varphi(x) := 1 + \int_{-\infty}^{x} h(x)^{2}\,dx$ we obtain a function satisfying (a)--(d), and where $\sqrt{\varphi_{x}}$ is smooth and compactly supported by definition.
	Due to the properties of $\varphi$, $v = \varphi u^{n+1/2}$ is also an admissible test function in $H^{2}(\mathbb{R})$, and we will write $w := u^{n+1/2}$ to save space. Inserting this in \eqref{CN_eqn} and using the identity
	\begin{equation*}
	\left< \left(\frac{w^2}{2}\right)_{x}, w \varphi \right> = -\frac{1}{3} \int_{\mathbb{R}} w^{3} \varphi_{x}\,dx
	\end{equation*}
	which is easily attained from integration by parts, we have
	\begin{equation}
	\frac{1}{2} \left\|u^{n+1} \sqrt{\varphi} \right\|_{L^{2}(\mathbb{R})}^{2} + \Delta t \int_{\mathbb{R}} \mathcal{H}w_{x} (\varphi w)_{x}\,dx -\frac{\Delta t}{3} \int_{\mathbb{R}} w^{3} \varphi_{x}\,dx = \frac{1}{2} \left\|u^{n} \sqrt{\varphi} \right\|_{L^{2}(\mathbb{R})}^{2}.
	\label{CN_phi}
	\end{equation}
	We rewrite the second term on the left-hand side as
	\begin{align*}
	\begin{split}
	\int_{\mathbb{R}} \mathcal{H}w_{x} (\varphi w)_{x}\,dx &= \int_{\mathbb{R}} \varphi w \mathcal{H}(-\partial_{xx})w \,dx \\
	&= \frac{1}{2} \left<w, \varphi \mathcal{H}(-\partial_{xx}) w\right> - \frac{1}{2} \left<w, \mathcal{H}(-\partial_{xx}) \varphi w\right> \\
	&= \frac{1}{2} \left< w, -[\mathcal{H}(-\partial_{xx}), \varphi] w \right> \\
	&= \left< D^{1/2}w, \varphi_{x} D^{1/2} w \right> + \frac{1}{2} \left< w, R_{1/2}(\varphi) w \right>,
	\end{split}
	\end{align*}
	where we have used \eqref{commutator_1}.
	To show that the last term is bounded we use the following fact:
	Given $f \in C^{N}(\mathbb{R})$ and $f^{(k)} \in L^{1}(\mathbb{R})$ for $0 \le k \le N$ we have
	\begin{equation}
	|\hat{f}(\xi)| \le \frac{C}{(1+|\xi|)^{N}},
	\label{fourierBounded}
	\end{equation}
	for some suitable C depending on $N$ and $\|f^{(k)}\|_{L^{1}(\mathbb{R})}$ for $0 \le k \le N$. This estimate is fairly standard and easily obtained using properties of Fourier transforms of differentiated functions, see e.g.\ \cite[p. 109]{FourierAnalysis}, and the straightforward inequality $\|\widehat{f^{(k)}}\|_{L^{\infty}(\mathbb{R})} \le \|f^{(k)}\|_{L^{1}(\mathbb{R})}/\sqrt{2\pi}$. As $\varphi_{xx}$ belongs to $C^{\infty}_{c}(\mathbb{R})$ and in particular $C^{2}_{c}(\mathbb{R})$, we have $\varphi^{(2+k)} \in L^{1}(\mathbb{R})$ for $k=0,1,2$. 
	According to \eqref{fourierBounded} we then have $|\widehat{\varphi_{xx}}(\xi)| \le \frac{C}{(1+|\xi|)^{2}}$ and thus $\|\widehat{\varphi_{xx}}\|_{L^{1}(\mathbb{R})} \le 2C$.
	Then it follows from \eqref{commutator_est_1} that
	\begin{equation*}
	\frac{1}{2} \left< w, R_{1/2}(\varphi) w \right> \ge - \frac{C}{\sqrt{2\pi}} \|w\|_{L^{2}(\mathbb{R})}^{2} = - \widetilde{C} \|w\|_{L^{2}(\mathbb{R})}^{2}.
	\end{equation*}
	
	Next we want to estimate the term stemming from the nonlinearity in terms of $\int_{\mathbb{R}} |D^{1/2}w|^{2} \varphi_{x}\,dx$, and the following results will be of use.
	
	From Theorem 6.5 in \cite{hitchhiker2011} we have for  $s \in (0,1)$ and $p \in [1,\infty)$ such that $sp < n$ there exists a positive constant $C = C(n,p,s)$ such that the Sobolev space $W^{s,p}(\mathbb{R}^{n})$ is continuously embedded in $L^{q}(\mathbb{R}^{n})$ for any $q \in [p, p^*]$ with $p^* := np/(n-sp)$,
	\begin{equation}
	\|f\|_{L^{q}(\mathbb{R}^{n})} \le C \|f\|_{W^{s,p}(\mathbb{R}^{n})},
	\label{embedding}
	\end{equation}
	where $W^{s,2}(\mathbb{R}^{n}) = H^{s}(\mathbb{R}^{n})$.
	
	Next we have the interpolation inequality stated in Proposition 3.1 of \cite{IntNonLinDisp}:
	If $s_{1} \le s \le s_{2}$ with $s = \theta s_{1} + (1-\theta) s_{2}, 0 \le \theta \le 1$, then
	\begin{equation}
	\|u\|_{H^{s}(\mathbb{R}^{n})} \le \|u\|_{H^{s_{1}}(\mathbb{R}^{n})}^{\theta} \|u\|_{H^{s_{2}}(\mathbb{R}^{n})}^{1-\theta}.
	\label{interpolation}
	\end{equation}
	
	Now we turn to the third term on the left-hand side of \eqref{CN_phi} and estimate it as
	\begin{align*}
	\begin{split}
	\int_{\mathbb{R}} w^{3} \varphi_{x}\,dx &\le  \left( \int_{\mathbb{R}} w^{2}\,dx\right)^{1/2} \left( \int_{\mathbb{R}} w^{4} \varphi_{x}^{2}\,dx\right)^{1/2} \\
	&= \left\|w\right\|_{L^{2}(\mathbb{R})} \left\|w \sqrt{\varphi_{x}} \right\|_{L^{4}(\mathbb{R})}^{2} \\
	&\le C \left\|w\right\|_{L^{2}(\mathbb{R})} \left\|w \sqrt{\varphi_{x}} \right\|_{H^{1/4}(\mathbb{R})}^{2} \\
	&\le C \left\|w\right\|_{L^{2}(\mathbb{R})} \left\|w \sqrt{\varphi_{x}} \right\|_{L^{2}(\mathbb{R})} \left\|w \sqrt{\varphi_{x}} \right\|_{H^{1/2}(\mathbb{R})} \\
	&\le \frac{1}{2} \left\|w \sqrt{\varphi_{x}} \right\|_{H^{1/2}(\mathbb{R})}^{2} + \frac{C^{2}}{2} \left\|w\right\|_{L^{2}(\mathbb{R})}^{2} \left\|w \sqrt{\varphi_{x}} \right\|_{L^{2}(\mathbb{R})}^{2} \\
	&\le \frac{1}{2} \left\| D^{1/2} (w \sqrt{\varphi_{x}}) \right\|_{L^{2}(\mathbb{R})}^{2} + \frac{1}{2} \left\|w \sqrt{\varphi_{x}} \right\|_{L^{2}(\mathbb{R})}^{2} \\
	&\quad+ \frac{C^{2}}{2} \left\|w\right\|_{L^{2}(\mathbb{R})}^{2} \left\|w \sqrt{\varphi_{x}} \right\|_{L^{2}(\mathbb{R})}^{2} \\
	&\le \frac{1}{2} \left\| D^{1/2} (w \sqrt{\varphi_{x}}) \right\|_{L^{2}(\mathbb{R})}^{2} + \frac{1}{2} \left(1 + C^{2}\left\|w\right\|_{L^{2}(\mathbb{R})}^{2} \right) \left\|w\right\|_{L^{2}(\mathbb{R})}^{2}.
	\end{split}
	\end{align*}
	The second inequality above is an application of \eqref{embedding} with $n=1$, $p=2$, $s = \frac{1}{4}$, and $q = p^* = 4$, while the third inequality comes from \eqref{interpolation} with $n = 1$ $s = \frac{1}{4}$, $s_{1} = 0$, $s_{2} = \frac{1}{2}$, and $\theta = \frac{1}{2}$.
	For the first term in the last line we set $h = \sqrt{\varphi_{x}}$ and use the commutator identity
	\begin{equation*}
	D^{1/2} h = h D^{1/2} + S_{1/2}(h),
	\end{equation*}
	where $S_{1/2}(h)$ is a remainder operator.
	From Proposition 2.1 in \cite{Ginibre1991} we have
	\begin{equation}
	\vertiii{S_{1/2}(h)} \le \frac{1}{\sqrt{2\pi}} \left\|\widehat{D^{1/2}h}\right\|_{L^{1}(\mathbb{R})},
	\label{commutator_est_2}
	\end{equation}
	where $\vertiii{\cdot}$ denotes the operator norm in $L^{2}(\mathbb{R})$.
	This can be estimated as
	\begin{align*}
	\vertiii{S_{1/2}(h)} &\le \frac{1}{\sqrt{2\pi}} \left\|\widehat{D^{1/2}h}\right\|_{L^{1}(\mathbb{R})} = \frac{1}{\sqrt{2\pi}} \left\||\xi|^{1/2} \hat{h}\right\|_{L^{1}(\mathbb{R})} \\
	&\le \frac{1}{\sqrt{2\pi}} \left\|(1+|\xi|) \hat{h}\right\|_{L^{1}(\mathbb{R})} = \frac{1}{\sqrt{2\pi}} \left( \|\hat{h}\|_{L^{1}(\mathbb{R})} + \| \widehat{h'} \|_{L^{1}(\mathbb{R})} \right).
	\end{align*}
	Noting that $h \in C_{c}^{\infty}(\mathbb{R})$ and in particular $C_{c}^{3}(\mathbb{R})$, so that $h^{(k)} \in L^{1}(\mathbb{R})$ for $k = 0,1,2,3$ and using \eqref{fourierBounded} we have the estimate
	\begin{equation*}
	\vertiii{S_{1/2}(h)} \le \frac{4C}{\sqrt{2\pi}} =: C_{S}.
	\end{equation*}
	Thus, taking the $L^{2}$-norm we obtain
	\begin{align*}
	\begin{split}
	\left\|D^{1/2} (w \sqrt{\varphi_{x}}) \right\|_{L^{2}(\mathbb{R})} &\le \left\|\sqrt{\varphi_{x}} D^{1/2} w \right\|_{L^{2}(\mathbb{R})} + \left\| S_{1/2}(h) w \right\|_{L^{2}(\mathbb{R})} \\
	&\le \left\|\sqrt{\varphi_{x}} D^{1/2} w \right\|_{L^{2}(\mathbb{R})} + C_{S} \left\| w \right\|_{L^{2}(\mathbb{R})},
	\end{split}
	\end{align*}
	for some constant $C_{S}$ depending on $\varphi_{x}$.
	
	Inserting the above estimates in \eqref{CN_phi} we obtain
	\begin{align*}
	\begin{split}
	\frac{1}{2} \left\|u^{n+1} \sqrt{\varphi}\right\|^{2}_{L^{2}(\mathbb{R})} \hspace{-1em}&\hspace{1em}+ \Delta t \left\|\sqrt{\varphi_{x}} D^{1/2} w \right\|_{L^{2}(\mathbb{R})}^{2} - \Delta t \widetilde{C} \|w\|_{L^{2}(\mathbb{R})}^{2} \\
	&\le \frac{1}{2} \left\|u^{n} \sqrt{\varphi}\right\|_{L^{2}(\mathbb{R})}^{2}
	+ \frac{\Delta t}{3} \left\|\sqrt{\varphi_{x}} D^{1/2} w \right\|_{L^{2}(\mathbb{R})}^{2}
	+ \frac{\Delta t}{3} C_{S}^{2} \|w\|_{L^{2}(\mathbb{R})}^{2}\\
	&\quad+ \frac{\Delta t}{6} \left(1 + C^{2} \|w\|_{L^{2}(\mathbb{R})}^{2} \right) \|w\|_{L^{2}(\mathbb{R})}^{2},
	\end{split}
	\end{align*}
	which again implies
	\begin{multline*}
	\frac{1}{2} \left\|u^{n+1} \sqrt{\varphi}\right\|_{L^{2}(\mathbb{R})}^{2} + \frac{2 \Delta t}{3} \int_{\mathbb{R}} \left| D^{1/2} u^{n+1/2} \right|^{2} \varphi_{x}\,dx \\
	\le \frac{1}{2} \left\|u^{n} \sqrt{\varphi}\right\|_{L^{2}(\mathbb{R})}^{2} + C\left(\left\|u^{0}\right\|_{L^{2}(\mathbb{R})}\right) \Delta t,
	\end{multline*}
	where we in the last term have used that the $L^{2}$-norm of $w = u^{n+1/2}$ is bounded by the norm of $u^{0}$.
	By dropping the positive second term on the left-hand side, summing from $n=0$ to $n=m-1$ and utilizing that this is a telescoping sum we obtain
	\begin{equation*}
	\left\|u^{m} \sqrt{\varphi}\right\|_{L^{2}(\mathbb{R})}^{2} \le \left\|u^{0} \sqrt{\varphi}\right\|_{L^{2}(\mathbb{R})}^{2} + C\left(\left\|u^{0}\right\|_{L^{2}(\mathbb{R})}\right) m \Delta t.
	\end{equation*}
	Also, first summing and then dropping $
	\frac{1}{2} \left\|u^{m+1} \sqrt{\varphi}\right\|_{L^{2}(\mathbb{R})}^{2}$ on the left-hand side yields the estimate
	\begin{equation*}
	\Delta t \sum_{n=0}^{m} \int_{-R}^{R} \left| D^{1/2}u^{n+1/2} \right|^{2}\,dx \le \frac{3}{2} \left(\frac{1}{2}\left\|u^{0} \sqrt{\varphi}\right\|_{L^{2}(\mathbb{R})}^{2} + C\left(\left\|u^{0}\right\|_{L^{2}(\mathbb{R})}\right) (m+1) \Delta t \right).
	\end{equation*}
	Together these estimates imply that given initial data $u^{0} \in L^{2}(\mathbb{R})$ we have \[u^{n+1/2} \in \ell^{2}\left([0,m\Delta t]; H^{1/2}([-R,R])\right), \quad 0 \le m \le N,\] which shows that the solutions of the Crank--Nicolson temporal discretized equation also exhibit the local smoothing effect of the BO equation, as the above sequence space is a temporally discrete analogue of $L^{2}\left([0,T);H^{1/2}([-R,R])\right)$.
	Since this smoothing is the main ingredient of the convergence proof in the case of KdV, we want to show that it is present also in our fully discretized element scheme presented in the next section.
	When formulating the scheme we follow \cite{Dutta2016} in using test functions of the form $\varphi v$, where $v$ belongs to some finite element space. This makes \eqref{CN_phi} hold and leads to a $H^{1/2}$-bound like the one obtained here.
	The problem with this form of the scheme is that one loses the a priori preservation of the $L^{2}$-norm that was obtained in \eqref{bound_L2_1} by directly choosing the test function $u^{n+1/2}$.
	To overcome this difficulty we make use of a CFL condition combined with a majorizing differential equation.
	
	\section{Formulation of the discrete scheme}
	\label{scheme}
	Here we formulate the Crank--Nicolson type Galerkin scheme under consideration.
	First we present some remarks on notation and the discretization of time and space.
	Then we use the weak formulation of the problem and a Crank--Nicolson temporal discretization to define a sequence of functions approximating the exact solution at each discrete time step.
	We also define an iteration scheme to solve the implicit equation for each time step and show that this has a solution.
	
	\subsection{Notation and discretization}
	We start by partitioning the real line in equally sized elements in the form of intervals.
	First define the grid points $x_{j} = j\Delta x$ for $j \in \mathbb{Z}$, where $\Delta x$ is the spatial discretization parameter or step length.
	Then the elements can be written as $I_{j} = [x_{j-1}, x_{j}]$. Now turn to the discretization of the time interval considered.
	Given a fixed time horizon $T > 0$ and a temporal discretization parameter $\Delta t$ we set $t_{n} = n\Delta t$ for $n \in \{0,1,\dots,N\}$, where $\left(N+\frac{1}{2}\right)\Delta t = T$.
	For convenience we also use the notation $t_{n+1/2} = \left(t_{n}+t_{n+1}\right)/2$.
	
	Let $\varphi$ be defined as in the previous section.
	We define the weighted $L^{2}$-inner product
	\begin{equation*}
	\left< u, v \right>_{\varphi} := \left< u, v \varphi \right>,
	\end{equation*}
	and the associated weighted norm $\|u\|_{2,\varphi} = \sqrt{\left<u,u\right>_{\varphi}}$.
	
	\subsection{Galerkin scheme}
	As always for the finite element method we start by deriving a weak formulation of the problem \eqref{BO}, like the one obtained in \eqref{weak_1}.
	Applying the Crank--Nicolson temporal discretization to the weak formulation gives \eqref{CN_eqn}.
	Instead of looking for solutions to this equation in $H^{2}(\mathbb{R})$ we will look for solutions belonging to a finite-dimensional subspace $S_{\Delta x}$ of this Hilbert space.
	
	We define the subspace $S_{\Delta x}$ as follows; assuming $r \ge 2$ is a fixed integer we denote the space of polynomials on the interval $I$ of degree not exceeding $r$ by $\mathbb{P}_{r}(I)$.
	Our goal is to find an approximation $u^{\Delta x}$ to the solution of \eqref{BO} which for all $t \in [0,T]$ belongs to
	\begin{equation}
	S_{\Delta x} = \{ v \in H^{2}(\mathbb{R}) \:|\: v \in \mathbb{P}_{r}(I_{j}), j \in \mathbb{Z} \}.
	\label{finite_space}
	\end{equation}
	Now define $\mathcal{P}$ to be the $L^{2}$-orthogonal projection onto $S_{\Delta x}$. Then we define the sequence $\{u^{n}\}_{n \ge 0}$ through the following procedure: Given $u^{0} = \mathcal{P}u_{0}$, find $u^{n+1} \in S_{\Delta x}$ which satisfies
	\begin{equation}
	\left< u^{n+1}, \varphi v \right> - \Delta t \left< \frac{\left(u^{n+1/2}\right)^{2}}{2}, (\varphi v)_{x} \right> + \Delta t \left< \mathcal{H}\left(u^{n+1/2}\right)_{x}, (\varphi v)_{x} \right> = \left< u^{n}, \varphi v \right>,
	\label{CN_element}
	\end{equation}
	for all $v \in S_{\Delta x}$ and $n \in \{0,1,\dots,N\}$.
	Clearly, \eqref{CN_element} is an implicit scheme and consequently one must solve a nonlinear equation to obtain $u^{n+1}$ from $u^{n}$. The procedure for solving this equation at each time step is described in the following subsection.
	Note also that $\left\|u^{0}\right\|_{L^{2}(\mathbb{R})} \le \left\|u_{0}\right\|_{L^{2}(\mathbb{R})}$, and thus from here on we will always use the $L^{2}$-norm of the initial data $u_{0}$ as an upper bound for the norm of the approximation $u^{0}$.
	
	The following \emph{inverse inequalities} presented in \cite[p. 142]{Ciarlet} will be instrumental in our later estimates.
	\begin{equation}
	\|z_{x}\|_{L^{\infty}(\mathbb{R})} \le \frac{C_{1}^{1/2}}{(\Delta x)^{1/2}} \|z_{x}\|_{L^{2}(\mathbb{R})} \le \frac{C_{2}^{1/2}}{(\Delta x)^{3/2}} \|z\|_{L^{2}(\mathbb{R})},\quad z \in S_{\Delta x},
	\label{inverse_ineq}
	\end{equation}
	where the constants $C_{1}, C_{2} > 0$ are independent of $z$ and $\Delta x$.
	Note that the leftmost inequality also holds for $z$ instead of $z_{x}$.
	
	\subsection{Solvability for one time step}
	To show the existence of a solution $u^{n}$ for each time step we define the iteration scheme
	\begin{equation}
	\begin{cases}
	\left< w^{\ell + 1}, \varphi v \right> - \frac{\Delta t}{2} \left< \left( \frac{w^{\ell} + u^{n}}{2} \right)^{2}, (\varphi v)_{x} \right> + \Delta t \left< \left( \mathcal{H}\frac{w^{\ell + 1} + u^{n} }{2} \right)_{x}, (\varphi v)_{x} \right> = \left< u^{n}, \varphi v \right>, \\
	w^{0} = u^{n},
	\end{cases}
	\label{iteration}
	\end{equation}
	which is to hold for all test functions $v \in S_{\Delta x}$.
	The existence of a unique solution $w^{\ell+1}$ to \eqref{iteration} is guaranteed by noting that one may consider this a Galerkin scheme for a linear problem involving a bilinear form in the variables $w^{\ell+1}$ and $v$.
	Using the commutator estimate for the part of the bilinear form involving the Hilbert transform, and choosing $\Delta t$ small enough, $\widetilde{C}\Delta t \le \frac{1}{2}$ say, the bilinear form is coercive with respect to the $L^{2}$-norm, which implies positive definiteness of the resulting matrix system.
	
	We now present a lemma that guarantees the solvability of the implicit scheme \eqref{CN_element}, and the technique for showing this is due to Simon Laumer (private communication).
	\begin{lemma}
		Choose a constant $L$ such that $0 < L < 1$ and set
		\begin{equation*}
		K = \frac{7-L}{1-L} > 7.
		\end{equation*}
		We consider the iteration scheme \eqref{iteration} and assume that the following CFL condition holds,
		\begin{equation}
		\lambda \le \frac{L}{2\sqrt{2} \sqrt{C_{2}} K \|u^{n}\|_{2, \varphi}},
		\label{CFL_1}
		\end{equation}
		where $C_{2}$ is defined in \eqref{inverse_ineq} and $\lambda$ is given by
		\begin{equation}
		\lambda^{2} = \frac{\Delta t^{2}}{\Delta x^{3}}
		\end{equation}
		where $\Delta t$ is taken sufficiently small.
		Then there exists a function $u^{n+1}$ which solves \eqref{CN_element} and $\lim\limits_{\ell \to \infty} w^{\ell} = u^{n+1}$.
		In addition,
		\begin{equation}
		\|u^{n+1}\|_{2, \varphi} \le K \|u^{n}\|_{2, \varphi}.
		\end{equation}
		\label{lemma_onestep}
	\end{lemma}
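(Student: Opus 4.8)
The plan is to realise $\{w^{\ell}\}_{\ell\ge0}$ as the orbit of a strict contraction on a closed ball and invoke a Banach-type argument. Write $M:=\|u^{n}\|_{2,\varphi}$; if $M=0$ then $u^{n}\equiv0$ (as $\varphi\ge1$) and $u^{n+1}=0$ solves \eqref{CN_element}, so assume $M>0$. Put $B_{K}:=\{z\in S_{\Delta x}:\|z\|_{2,\varphi}\le KM\}$. On $S_{\Delta x}$ the inverse inequalities \eqref{inverse_ineq} make $\|\cdot\|_{2,\varphi}$ equivalent to $\|\cdot\|_{H^{2}(\mathbb{R})}$, so $B_{K}$, a closed ball in the closed subspace $S_{\Delta x}$ of the Hilbert space $H^{2}(\mathbb{R})$, is a complete metric space; it contains $w^{0}=u^{n}$ since $K>7>1$. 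The coercivity argument preceding the lemma (valid once $\widetilde{C}\Delta t\le\tfrac12$, which is part of ``$\Delta t$ sufficiently small'') shows each step of \eqref{iteration} has a unique solution, so $\Phi:w^{\ell}\mapsto w^{\ell+1}$ is a well-defined map on $S_{\Delta x}$. It then suffices to prove: (i) $\Phi(B_{K})\subseteq B_{K}$; and (ii) $\Phi$ is a strict contraction on $B_{K}$ in $\|\cdot\|_{2,\varphi}$. From (i)--(ii) one gets a unique fixed point $u^{n+1}\in B_{K}$, which by construction solves \eqref{CN_element}, satisfies $w^{\ell}\to u^{n+1}$, and obeys $\|u^{n+1}\|_{2,\varphi}\le KM$.

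For (i), take $w\in B_{K}$, set $\tilde w:=\Phi(w)$ and $p:=\tfrac12(\tilde w+u^{n})$, and test the linear equation defining $\tilde w$ with $v=\tilde w+u^{n}\in S_{\Delta x}$. The non-dispersive linear terms combine to $\|\tilde w\|_{2,\varphi}^{2}-M^{2}$. The dispersive term equals $2\Delta t\langle(\mathcal{H}p)_{x},(\varphi p)_{x}\rangle$, and the commutator identity \eqref{commutator_1} with the bound \eqref{commutator_est_1}, exactly as in Section \ref{preliminaries}, gives $2\Delta t\langle(\mathcal{H}p)_{x},(\varphi p)_{x}\rangle=2\Delta t\langle D^{1/2}p,\varphi_{x}D^{1/2}p\rangle+\Delta t\langle p,R_{1/2}(\varphi)p\rangle\ge-2\widetilde{C}\Delta t\|p\|_{2,\varphi}^{2}$, using $\varphi_{x}\ge0$ and $\varphi\ge1$. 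For the nonlinear term one expands $(\varphi p)_{x}=\varphi_{x}p+\varphi p_{x}$ and uses \eqref{inverse_ineq} to trade the factor $(\Delta x)^{-3/2}$ against $\Delta t$, producing a bound of the form $C\sqrt{C_{2}}\,\lambda\,\|p\|_{2,\varphi}\,\|\tfrac12(w+u^{n})\|_{2,\varphi}^{2}$ (the $\varphi_{x}p$-piece scales like $\lambda\Delta x$ and is lower order). Inserting $\|\tfrac12(w+u^{n})\|_{2,\varphi}\le\tfrac12(K+1)M$ (as $w\in B_{K}$), $\|p\|_{2,\varphi}\le\tfrac12(\|\tilde w\|_{2,\varphi}+M)$, and the CFL condition \eqref{CFL_1} in the form $\sqrt{C_{2}}\,\lambda\,M\le L/(2\sqrt{2}\,K)$, and taking $\Delta t$ small to absorb the $\widetilde{C}\Delta t$-term, one obtains a quadratic inequality for $A:=\|\tilde w\|_{2,\varphi}$ whose larger root does not exceed $KM$; this is exactly the calibration behind the choice $K=\tfrac{7-L}{1-L}$, and it gives $\tilde w\in B_{K}$.

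For (ii), let $w,w'\in B_{K}$, $\tilde w=\Phi(w)$, $\tilde w'=\Phi(w')$; subtract the two defining equations and test with $v=e:=\tilde w-\tilde w'$. The dispersive term contributes at least $-\tfrac{\widetilde{C}\Delta t}{2}\|e\|_{2,\varphi}^{2}$ as above, and the nonlinear difference factors as $\tfrac14(w-w')(w+w'+2u^{n})$; estimating as in (i) with $\|\tfrac12(w+w'+2u^{n})\|_{2,\varphi}\le(K+1)M$ and \eqref{CFL_1} leads to
\begin{equation*}
\Big(1-\tfrac{\widetilde{C}\Delta t}{2}\Big)\|e\|_{2,\varphi}^{2}\le\frac{L(K+1)}{4\sqrt{2}\,K}\,\|e\|_{2,\varphi}\,\|w-w'\|_{2,\varphi}.
\end{equation*}
Since $L<1$ and $K>7$ give $\tfrac{L(K+1)}{4\sqrt{2}\,K}<\tfrac{\sqrt{2}}{7}<1$, for $\Delta t$ small the resulting factor is $<1$, so $\Phi$ is a strict contraction and $\{w^{\ell}\}$ converges geometrically to the unique fixed point $u^{n+1}$. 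Passing to the limit in \eqref{iteration} is routine: by \eqref{inverse_ineq} the convergence $w^{\ell}\to u^{n+1}$ holds in $H^{2}(\mathbb{R})$ too, $z\mapsto z^{2}$ and $\mathcal{H}$ are continuous there, and the test functions are fixed, so every term converges to the corresponding term of \eqref{CN_element}.

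The main obstacle is step (i). Since the weighting by $\varphi$ destroys the exact $L^{2}$-identity \eqref{bound_L2_1} available for the unweighted Crank--Nicolson scheme, there is no a priori bound to lean on, and the nonlinear term -- after the unavoidable loss of $(\Delta x)^{-3/2}$ via \eqref{inverse_ineq} -- can only be controlled through the CFL constraint \eqref{CFL_1}. The delicate point is to track the constants sharply enough that the quadratic inequality for $\|\tilde w\|_{2,\varphi}$ closes at exactly $KM$ with $K=\tfrac{7-L}{1-L}$, while simultaneously choosing $\Delta t$ small enough to guarantee solvability of each linear step, to absorb the remainder operator $R_{1/2}(\varphi)$, and to render the lower-order terms (scaling like $\lambda\Delta x$) negligible.
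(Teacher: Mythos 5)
Your proposal is correct and arrives at the same conclusion, but it packages the argument differently from the paper, so a comparison is worthwhile. The paper never establishes ball invariance $\Phi(B_{K})\subseteq B_{K}$; it only tracks the single orbit started at $w^{0}=u^{n}$. Concretely, it proves the difference recursion $\|w^{\ell+1}-w^{\ell}\|_{2,\varphi}\le L\|w^{\ell}-w^{\ell-1}\|_{2,\varphi}$ together with the one-step bound $\|w^{1}\|_{2,\varphi}\le 5\|u^{n}\|_{2,\varphi}$ (obtained by testing the $\ell=0$ equation of \eqref{iteration} with $v=(u^{n}+w^{1})/2$ — exactly your invariance computation specialized to $w=u^{n}$), and then keeps the orbit in the ball via the telescoping estimate $\|w^{m+1}\|_{2,\varphi}\le\|w^{1}-w^{0}\|_{2,\varphi}\sum_{\ell\ge0}L^{\ell}+\|w^{0}\|_{2,\varphi}\le\bigl(\tfrac{6}{1-L}+1\bigr)\|u^{n}\|_{2,\varphi}$. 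This geometric series is precisely where $K=\tfrac{7-L}{1-L}$ comes from; it is \emph{not} calibrated so that your quadratic inequality in step (i) closes exactly at $KM$, so you should drop that claim and instead actually verify the invariance. It does hold, with room to spare: after absorbing the $\widetilde{C}\Delta t$ terms one gets roughly $A\le M+\tfrac{L(K+1)^{2}}{16\sqrt{2}K}M$ under \eqref{CFL_1}, and $\tfrac{L(K+1)^{2}}{16\sqrt{2}K}\le K-1$ for every $K\ge7$ and $L<1$ — but as written your step (i) asserts the closure rather than proving it, and this is the one place where your argument needs to be completed. Granting that, your Banach packaging buys uniqueness of the fixed point in $B_{K}$ and geometric convergence from any starting point in the ball, whereas the paper's orbit-tracking argument gets away with the weaker one-step bound $\|w^{1}\|_{2,\varphi}\le5\|u^{n}\|_{2,\varphi}$ and never needs invariance of the full ball. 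The remaining ingredients — coercivity of each linear solve for $\widetilde{C}\Delta t\le\tfrac12$, the commutator estimate for the dispersive term, the inverse inequalities \eqref{inverse_ineq} trading $\Delta x^{-3/2}$ for $\lambda$, and passage to the limit using norm equivalence on $S_{\Delta x}$ for fixed $\Delta x$ — coincide with the paper's, up to immaterial constants (e.g.\ your contraction factor $\tfrac{L(K+1)}{4\sqrt{2}K}$ versus the paper's effective $\tfrac{L(K+1)}{8\sqrt{2}K}$; both are below $1$).
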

	
	\begin{proof}[Proof of Lemma \ref{lemma_onestep}]
		We start by rewriting \eqref{iteration} as
		\begin{equation*}
		\left< w^{\ell+1}, \varphi v\right> + \frac{\Delta t}{4} \left< \left(u^{n}w^{\ell}\right)_{x}, \varphi v \right> + \frac{\Delta t}{4} \left< w^{\ell}w_{x}^{\ell}, \varphi v\right> + \frac{\Delta t}{2} \left< \mathcal{H}w^{\ell+1}_{x}, (\varphi v)_{x} \right> = \mathcal{G}(u^{n}, \varphi v)
		\end{equation*}
		for all $v \in S_{\Delta x}$, where
		\begin{equation*}
		\mathcal{G}(u^{n}, \varphi v) := \left< u^{n}, \varphi v \right> + \frac{\Delta t}{8} \left< \left(u^{n}\right)^{2}, (\varphi v)_{x} \right> - \frac{\Delta t}{2} \left< \mathcal{H}u^{n}_{x}, (\varphi v)_{x} \right>.
		\end{equation*}
		From the above equation one derives
		\begin{align*}
		\begin{split}
		\left< w^{\ell+1}-w^{\ell}, \varphi v \right> &+ \frac{\Delta t}{4} \left< \left(u^{n}\left(w^{\ell}-w^{\ell-1}\right)\right)_{x}, \varphi v \right> \\
		&+ \frac{\Delta t}{4} \left< w^{\ell}w^{\ell}_{x}-w^{\ell-1}w^{\ell-1}_{x}, \varphi v \right>
		+ \frac{\Delta t}{2} \left< \mathcal{H}\left(w^{\ell+1}-w^{\ell}\right)_{x}, (\varphi v)_{x} \right> = 0.
		\end{split}
		\end{align*}
		Now substitute $v = w^{\ell+1}-w^{\ell} =: w$ in the above equation to get
		\begin{align*}
		\begin{split}
		\left< w, \varphi w\right> &+ \frac{\Delta t}{2} \left< \mathcal{H}w_{x}, (\varphi w)_{x} \right> \\
		=& \underbrace{-\frac{\Delta t}{4} \left< \left(u^{n}\left(w^{\ell}-w^{\ell-1}\right)\right)_{x}, \varphi w \right>}_{\mathcal{A}_{1}} \underbrace{- \frac{\Delta t}{4} \left< w^{\ell}w^{\ell}_{x} - w^{\ell-1}w^{\ell-1}_{x}, \varphi w \right>}_{\mathcal{A}_{2}}.
		\end{split}
		\end{align*}
		For the term involving the Hilbert transform we estimate as before and use the fact that $\varphi \ge 1$ to obtain
		\begin{equation*}
		\left< \mathcal{H}w_{x}, (\varphi w)_{x} \right> \ge \left\|\sqrt{\varphi_{x}} D^{\frac{1}{2}}w\right\|_{L^{2}(\mathbb{R})}^{2} - \widetilde{C} \|w\|_{L^{2}(\mathbb{R})}^{2} \ge -\widetilde{C} \|w\|_{2,\varphi}^{2}.
		\end{equation*}
		
		We then estimate the term $\mathcal{A}_{2}$ by repeatedly applying Young's inequality and using \eqref{inverse_ineq},
		\begin{align*}
		\begin{split}
		\mathcal{A}_{2} &= \frac{1}{4}\int_{\mathbb{R}}(-\Delta t)\left(w^{\ell}w^{\ell}_{x}-w^{\ell-1}w^{\ell-1}_{x}\right) \varphi w\,dx \\
		&\le \frac{\Delta t^{2}}{8} \int_{\mathcal{R}}\left(w^{\ell}w^{\ell}_{x}-w^{\ell-1}w^{\ell-1}_{x}\right)^{2} \varphi \,dx + \frac{1}{8}\int_{\mathbb{R}}w^{2}\varphi \,dx \\
		&= \frac{\Delta t^{2}}{8} \int_{\mathbb{R}}\left(\left(w^{\ell}-w^{\ell-1}\right)w^{\ell}_{x} + w^{\ell-1}\left(w^{\ell}_{x}-w^{\ell-1}_{x}\right)\right)^{2} \varphi \, dx + \frac{1}{8}\int_{\mathbb{R}}w^{2}\varphi \,dx \\
		&\le \frac{\Delta t^{2}}{4}\int_{\mathbb{R}}\left(w^{\ell}-w^{\ell-1}\right)^{2}\left(w^{\ell}_{x}\right)^{2}\varphi \,dx \\
		&\quad+ \frac{\Delta t^{2}}{4}\int_{\mathbb{R}}\left(w^{\ell-1}\right)^{2}\left(w^{\ell}_{x}-w^{\ell-1}_{x}\right)^{2}\varphi \,dx + \frac{1}{8}\int_{\mathbb{R}}w^{2} \varphi \,dx \\
		&\le \frac{\Delta t^{2}}{4} \left( \|w^{\ell}_{x}\|_{L^{\infty}(\mathbb{R})}^{2} \|w^{\ell}-w^{\ell-1}\|_{2,\varphi}^{2} + \|w^{\ell}_{x}-w^{\ell-1}_{x}\|_{L^{\infty}(\mathbb{R})}^{2} \|w^{\ell-1}\|_{2,\varphi}^{2} \right) + \frac{1}{8} \|w\|_{2,\varphi}^{2} \\
		&\le \frac{C_{2}\Delta t^{2}}{4\Delta x^{3}} \left( \|w^{\ell}\|_{L^{2}(\mathbb{R})}^{2} \|w^{\ell}-w^{\ell-1}\|_{2,\varphi}^{2} + \|w^{\ell}-w^{\ell-1}\|_{L^{2}(\mathbb{R})}^{2} \|w^{\ell-1}\|_{2,\varphi}^{2} \right) + \frac{1}{8} \|w\|_{2,\varphi}^{2},
		\end{split}
		\end{align*}
		which yields
		\begin{equation*}
		\mathcal{A}_{2} \le \frac{1}{8}\|w\|_{2,\varphi}^{2} + \frac{1}{2} C_{2} \lambda^{2} \max\{\|w^{\ell}\|_{2,\varphi}^{2}, \|w^{\ell-1}\|_{2,\varphi}^{2}\} \|w^{\ell}-w^{\ell-1}\|_{2,\varphi}^{2}.
		\end{equation*}
		Likewise we estimate $\mathcal{A}_{1}$,
		\begin{align*}
		\mathcal{A}_{1} \le \frac{\Delta t^{2}}{8} \int_{\mathbb{R}} \left(\left(u^{n}\left(w^{\ell}-w^{\ell-1}\right)\right)_{x}\right)^{2}\varphi \,dx + \frac{1}{8} \int_{\mathbb{R}}w^{2}\varphi \,dx,
		\end{align*}
		where estimates analogous to the preceding ones lead to
		\begin{equation*}
		\mathcal{A}_{1} \le \frac{1}{8} \|w\|_{2,\varphi}^{2} + \frac{1}{2} C_{2} \lambda^{2} \|u^{n}\|_{2,\varphi} \|w^{\ell}-w^{\ell-1}\|_{2,\varphi}^{2}.
		\end{equation*}
		Collecting the bounds we have the following inequality for $\ell \ge 1$,
		\begin{multline*}
		\|w\|_{2,\varphi}^{2} - \frac{\Delta t}{2}\widetilde{C} \|w\|_{2,\varphi}^{2} \\
		\le \frac{1}{4} \|w\|_{2,\varphi}^{2} + C_{2} \lambda^{2} \max\{\|w^{\ell}\|_{2,\varphi}^{2}, \|w^{\ell-1}\|_{2,\varphi}^{2}, \|u^{n}\|_{2,\varphi}^{2}\} \|w^{\ell}-w^{\ell-1}\|_{2,\varphi}^{2},
		\end{multline*}
		which is equivalent to
		\begin{multline*}
		\frac{1}{2}\left(\frac{3}{2}-\widetilde{C}\Delta t\right)\|w^{\ell+1}-w^{\ell}\|_{2,\varphi}^{2} \\
		\le C_{2} \lambda^{2} \max\{\|w^{\ell}\|_{2,\varphi}^{2}, \|w^{\ell-1}\|_{2,\varphi}^{2}, \|u^{n}\|_{2,\varphi}^{2}\} \|w^{\ell}-w^{\ell-1}\|_{2,\varphi}^{2}.
		\end{multline*}
		Assuming $\Delta t$ small enough that $\frac{3}{2}-\widetilde{C}\Delta t \ge 1$ we obtain
		\begin{equation}
		\|w^{\ell+1}-w^{\ell}\|_{2,\varphi}^{2} \le 2 C_{2} \lambda^{2} \max\{\|w^{\ell}\|_{2,\varphi}^{2}, \|w^{\ell-1}\|_{2,\varphi}^{2}, \|u^{n}\|_{2,\varphi}^{2}\} \|w^{\ell}-w^{\ell-1}\|_{2,\varphi}^{2}.
		\label{w_bound}
		\end{equation}
		
		We will now bound $w^{1}$, and so by setting $\ell = 0$ in \eqref{iteration} we get
		\begin{align*}
		\left< w^{1}-u^{n}, \varphi v \right> + \Delta t \left< \mathcal{H}\left( \frac{u^{n}+w^{1}}{2} \right)_{x}, (\varphi v)_{x} \right> &= \frac{\Delta t}{2} \left< \left(u^{n}\right)^{2}, (\varphi v)_{x} \right> \\ &= -\Delta t \left< u^{n} u^{n}_{x}, \varphi v \right>.
		\end{align*}
		Choosing $v = \frac{u^{n}+w^{1}}{2}$ gives
		\begin{multline*}
		\frac{1}{2} \int_{\mathbb{R}} \left(\left(w^{1}\right)^{2} - \left(u^{n}\right)^{2}\right)\varphi \,dx + \Delta t \int_{\mathbb{R}} \mathcal{H}\left(\frac{u^{n}+w^{1}}{2}\right)_{x} \left(\varphi \frac{u^{n}+w^{1}}{2}\right)_{x}\,dx \\
		= -\Delta t \int_{\mathbb{R}} u^{n}u^{n}_{x} \frac{u^{n}+w^{1}}{2}\varphi\,dx.
		\end{multline*}
		Estimating the term involving the Hilbert transform as before, using Young's inequality and \eqref{inverse_ineq} leads to
		\begin{align*}
		\begin{split}
		\left(\frac{1}{4}-\frac{\widetilde{C}\Delta t}{2}\right) \|w^{1}\|^{2}_{2,\varphi} \le \left(\frac{1}{2}+\frac{1}{4}+\frac{\widetilde{C}\Delta t}{2}\right)\|u^{n}\|_{2,\varphi}^{2} + \frac{C_{2}}{2} \lambda^{2} \|u^{n}\|_{2,\varphi}^{4}.
		\end{split}
		\end{align*}
		Choosing $\Delta t$ small enough that $\frac{1}{4}-\frac{\widetilde{C}\Delta t}{2} \ge \frac{1}{8}$ then gives
		\begin{equation}
		\|w^{1}\|^{2}_{2,\varphi} \le 8 \left(1 + C_{2} \lambda^{2}\|u^{n}\|_{2,\varphi}^{2} \right)\|u^{n}\|_{2,\varphi}^{2}.
		\label{w1_bound}
		\end{equation}
		
		Now we claim that the following holds for $\ell \ge 1$,
		\begin{subequations}
			\begin{align}
			\|w^{\ell+1}-w^{\ell}\|_{2,\varphi} &\le L \|w^{\ell}-w^{\ell-1}\|_{2,\varphi}, \label{le_L}\\
			\|w^{\ell}\|_{2,\varphi} &\le K \|u^{n}\|_{2,\varphi}, \label{le_K}\\
			\|w^{1}\|_{2,\varphi} &\le 5 \|u^{n}\|_{2,\varphi}. \label{le_5}
			\end{align}
		\end{subequations}
		The proof follows an induction argument. From \eqref{w1_bound} and \eqref{CFL_1} we get
		\begin{align*}
		\begin{split}
		\|w^{1}\|_{2,\varphi} &\le \left(2\sqrt{2} + 2\sqrt{2}\sqrt{C_{2}} \lambda \|u^{n}\|_{2,\varphi} \right) \|u^{n}\|_{2,\varphi} \\
		&\le \left(2\sqrt{2} + \frac{L}{K} \right) \|u^{n}\|_{2,\varphi} \le 5 \|u^{n}\|_{2,\varphi} \le K \|u^{n}\|_{2,\varphi},
		\end{split}
		\end{align*}
		and so \eqref{le_5} and \eqref{le_K} hold for $\ell = 1$. Setting $\ell = 1$ in \eqref{w_bound} while using \eqref{CFL_1} we obtain
		\begin{align*}
		\begin{split}
		\|w^{2}-w^{1}\|_{2,\varphi} &\le \sqrt{2 C_{2}} \lambda \max \{ \|w^{1}\|_{2,\varphi}, \|u^{n}\|_{2,\varphi} \} \|w^{1}-u^{n}\|_{2,\varphi} \\
		&\le \left(\sqrt{2 C_{2}} \lambda \, 5 \|u^{n}\|_{2,\varphi} \right) \|w^{1}-u^{n}\|_{2,\varphi} \\
		&\le \frac{5 L}{2 K} \|w^{1}-u^{n}\|_{2,\varphi} \le L \|w^{1}-u^{n}\|_{2,\varphi},
		\end{split}
		\end{align*}
		which shows that \eqref{le_L} holds for $\ell = 1$. Now assume that \eqref{le_L} and \eqref{le_K} hold for $\ell = 1,\dots,m$. One then has
		\begin{align*}
		\begin{split}
		\|w^{m+1}\|_{2,\varphi} &\le \sum_{\ell = 0}^{m} \|w^{\ell+1}-w^{\ell}\|_{2,\varphi} + \|w^{0}\|_{2,\varphi} \le \|w^{1}-w^{0}\|_{2,\varphi} \sum_{\ell = 0}^{m} L^{\ell} + \|w^{0}\|_{2,\varphi} \\
		&\le 6\|u^{n}\|_{2,\varphi} \frac{1}{1-L} + \|u^{n}\|_{2,\varphi} = \frac{7-L}{1-L} \|u^{n}\|_{2,\varphi} = K \|u^{n}\|_{2,\varphi},
		\end{split}
		\end{align*}
		thus \eqref{le_K} holds for all $\ell$. This result together with \eqref{w_bound} and \eqref{CFL_1} lead to
		\begin{align*}
		\begin{split}
		\|w^{\ell+1}-w^{\ell}\|_{2,\varphi} &\le \sqrt{2 C_{2}} \lambda \max\{\|w^{\ell}\|_{2,\varphi}, \|w^{\ell-1}\|_{2,\varphi}, \|u^{n}\|_{2,\varphi} \} \|w^{\ell}-w^{\ell-1}\|_{2,\varphi} \\
		&\le \sqrt{2 C_{2}} \lambda K \|u^{n}\|_{2,\varphi} \|w^{\ell}-w^{\ell-1}\|_{2,\varphi} \le L \|w^{\ell}-w^{\ell-1}\|_{2,\varphi}.
		\end{split}
		\end{align*}
		This shows that \eqref{le_L} holds for all $\ell$ as well. Since $0 < L < 1$ this shows that $\{w^{\ell}\}$ is Cauchy and hence converges, which completes the proof of Lemma \ref{lemma_onestep}.
	\end{proof}
	
	\section{Convergence of the scheme}
	\label{convergence}
	In this section we will prove the convergence of the scheme introduced in the previous section.
	As mentioned earlier we will use a local smoothing effect of the BO equation to obtain a $H^{1/2}_{\text{loc}}(\mathbb{R})$ estimate of the approximations.
	We begin with the following important lemma.
	\begin{lemma}
		Let $\lambda$, $K$ and $L$ be defined as in Lemma \ref{lemma_onestep} and let $u^{n}$ be the solution of the scheme \eqref{CN_element}. Assume also that $\Delta t$ satisfies
		\begin{equation}
		\lambda \le \frac{L}{2\sqrt{2} \sqrt{C_{2}} K \sqrt{Y} },
		\label{CFL_2}
		\end{equation}
		for some $Y$ which only depends on $\|u_{0}\|_{L^{2}(\mathbb{R})}$.\footnote{In fact, $Y$ is the solution $y$ of the ordinary differential equation \eqref{ODE} evaluated at the time $T$ defined below.}
		Then there exist a positive time $T$ and a constant C, both depending only on $\|u_{0}\|_{L^{2}(\mathbb{R})}$ such that for all $n$ satisfying $n \Delta t \le T$ the following estimate holds
		\begin{equation}
		\|u^{n}\|_{L^{2}(\mathbb{R})} \le C\left(\|u_{0}\|_{L^{2}(\mathbb{R})}\right).
		\label{un_L2}
		\end{equation}
		In addition, the approximation $u^{n}$ satisfies the following $H^{1/2}$-estimate
		\begin{equation}
		\Delta t \sum_{\left(n+\frac{1}{2}\right)\Delta t \le T} \bigg\lVert D^{1/2}u^{n+1/2} \bigg\rVert_{L^{2}([-R,R])}^{2} \le C \left(\|u_{0}\|_{L^{2}(\mathbb{R})}\right), \quad \left(n+\frac{1}{2}\right) \Delta t < T.
		\label{un_H1/2}
		\end{equation}
		\label{lemma_H1/2_bound}
	\end{lemma}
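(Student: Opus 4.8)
The plan is to rerun the energy computation of Section~\ref{preliminaries} for the fully discrete scheme \eqref{CN_element}, exploiting that $u^{n+1/2} = (u^{n+1}+u^{n})/2$ lies in $S_{\Delta x}$ and is therefore an admissible test function. Taking $v = u^{n+1/2}$ in \eqref{CN_element}, the symmetry of $\left<\cdot,\cdot\right>_{\varphi}$ cancels the cross terms while integration by parts on the convective term reproduces verbatim the identity \eqref{CN_phi} with $w := u^{n+1/2}$. The estimates of Section~\ref{preliminaries} for the Hilbert term (via \eqref{commutator_1}--\eqref{commutator_est_1}) and for the cubic term (via \eqref{embedding}, \eqref{interpolation} and \eqref{commutator_est_2}) are pointwise in time and carry over unchanged, except that $\|u^{n+1/2}\|_{L^{2}(\mathbb{R})}$ can no longer be replaced by $\|u_{0}\|_{L^{2}(\mathbb{R})}$ since the Galerkin scheme does not preserve the $L^{2}$-norm. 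The outcome is the inequality
\begin{equation*}
\frac{1}{2}\|u^{n+1}\|_{2,\varphi}^{2} + \frac{2\Delta t}{3}\int_{\mathbb{R}}\left|D^{1/2}u^{n+1/2}\right|^{2}\varphi_{x}\,dx \le \frac{1}{2}\|u^{n}\|_{2,\varphi}^{2} + C\Delta t\left(1+\|u^{n+1/2}\|_{L^{2}(\mathbb{R})}^{2}\right)\|u^{n+1/2}\|_{L^{2}(\mathbb{R})}^{2},
\end{equation*}
with $C$ depending only on the cut-off $\varphi$.

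To close this estimate I would bound $\|u^{n+1/2}\|_{L^{2}(\mathbb{R})}^{2} \le \|u^{n+1/2}\|_{2,\varphi}^{2} \le \left(\frac{K+1}{2}\right)^{2}\|u^{n}\|_{2,\varphi}^{2}$, using $\varphi\ge 1$ and the one-step bound of Lemma~\ref{lemma_onestep}. Writing $b_{n} := \|u^{n}\|_{2,\varphi}^{2}$, and noting that $b_{0}\le (2+2R)\|u^{0}\|_{L^{2}(\mathbb{R})}^{2}\le (2+2R)\|u_{0}\|_{L^{2}(\mathbb{R})}^{2}$ since $u^{0}=\mathcal{P}u_{0}$, the energy inequality turns into the discrete Riccati recursion $b_{n+1} \le b_{n} + \Delta t\,F(b_{n})$ with $F(s) = C_{1}s + C_{2}s^{2}$ for constants $C_{1},C_{2} > 0$ depending only on $\varphi$ and $K$. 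I would then introduce the majorizing differential equation
\begin{equation}
y'(t) = F(y(t)), \qquad y(0) = (2+2R)\|u_{0}\|_{L^{2}(\mathbb{R})}^{2},
\label{ODE}
\end{equation}
whose nondecreasing solution exists on a maximal interval $[0,T^{*})$ with $T^{*} > 0$ depending only on $\|u_{0}\|_{L^{2}(\mathbb{R})}$; one then fixes any $T \in (0,T^{*})$ and sets $Y := y(T)$.

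The core of the argument is a simultaneous induction over $0 \le n \le N$ showing that (i) the CFL condition \eqref{CFL_1} of Lemma~\ref{lemma_onestep} holds at step $n$, so $u^{n+1}$ exists and $\|u^{n+1}\|_{2,\varphi}\le K\|u^{n}\|_{2,\varphi}$, and (ii) $b_{n}\le y(t_{n})\le Y$. Given (ii) at level $n$ one has $\|u^{n}\|_{2,\varphi}\le\sqrt{Y}$, which reduces the standing hypothesis \eqref{CFL_2} to \eqref{CFL_1}, giving (i) and the one-step bound; inserting this into the energy inequality yields $b_{n+1}\le b_{n}+\Delta t\,F(b_{n})$, and since $F\ge 0$ is nondecreasing on $[0,\infty)$ and $y$ is nondecreasing we have $y(t_{n+1}) = y(t_{n})+\int_{t_{n}}^{t_{n+1}}F(y)\,ds \ge y(t_{n})+\Delta t\,F(y(t_{n})) \ge b_{n}+\Delta t\,F(b_{n}) \ge b_{n+1}$, where in the last steps we used $b_{n}\le y(t_{n})$ and the monotonicity of $s\mapsto s+\Delta t\,F(s)$; since $t_{n+1}\le T$ this gives $b_{n+1}\le Y$. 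Consequently $\|u^{n}\|_{L^{2}(\mathbb{R})}^{2}\le b_{n}\le Y$ whenever $n\Delta t\le T$, which is \eqref{un_L2}. Knowing $b_{n}\le Y$, the right-hand side of the energy inequality is bounded by $\frac{1}{2}b_{n} + C(\|u_{0}\|_{L^{2}(\mathbb{R})})\Delta t$; since $\varphi_{x}\equiv 1$ on $[-R,R]$ and $\varphi_{x}\ge 0$ elsewhere, summing the resulting telescoping inequality over the relevant $n$ and discarding the nonnegative term $\frac{1}{2}b_{N+1}$ yields \eqref{un_H1/2}, exactly as in the last display of Section~\ref{preliminaries}.

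The main obstacle, as flagged at the end of Section~\ref{preliminaries}, is the loss of the a priori $L^{2}$-conservation: the energy balance controls $b_{n}$ only through a quadratic nonlinearity, so no uniform-in-$n$ bound is available and the argument can survive only up to the finite blow-up time of \eqref{ODE}, which is what forces the time horizon $T$ to be small in terms of the data. Closing the induction is delicate precisely because the CFL condition \eqref{CFL_2} must be exactly strong enough to propagate, through Lemma~\ref{lemma_onestep}, the bound $b_{n}\le Y$ that is itself required to invoke Lemma~\ref{lemma_onestep} — a discrete continuation argument — and one has to check that every constant $C_{1},C_{2},T$ and $Y$ depends only on $\|u_{0}\|_{L^{2}(\mathbb{R})}$, with $R$ regarded as a fixed parameter of the construction.
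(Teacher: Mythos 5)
Your proposal is correct and follows essentially the same route as the paper: test with $v=u^{n+1/2}$ to recover the weighted energy identity \eqref{CN_phi}, reuse the commutator and interpolation estimates of Section \ref{preliminaries}, invoke the one-step bound of Lemma \ref{lemma_onestep} to close the quadratic recursion for $b_n=\|u^n\|_{2,\varphi}^2$, and run a simultaneous induction comparing $b_n$ with a majorizing Riccati-type ODE up to half its blow-up time. The only cosmetic difference is that you absorb the factor coming from $\|u^{n+1/2}\|_{2,\varphi}\le\frac{K+1}{2}\|u^n\|_{2,\varphi}$ into the constants of $F$, whereas the paper keeps it as the argument $\frac{K^2+1}{2}y$ inside $f$ in \eqref{ODE}; both yield the same conclusion.
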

	
	\begin{proof}[Proof of Lemma \ref{lemma_H1/2_bound}]
		Starting with \eqref{CN_element} it follows that \eqref{CN_phi} holds, and with the same estimates as in the associated section and the fact that $1 \le \varphi(x) \le 2 + 2R$ we obtain the inequality
		\begin{align*}
		\begin{split}
		&\int_{\mathbb{R}}\left(u^{n+1}\right)^{2}\varphi\,dx + 2\Delta t \int_{\mathbb{R}} \left| D^{1/2}u^{n+1/2} \right|^{2} \varphi_{x}\,dx - 2 \Delta t \widetilde{C} \int_{\mathbb{R}}\left(u^{n+1/2}\right)^{2}\varphi\,dx\\
		&\le \int_{\mathbb{R}}\left(u^{n}\right)^{2}\varphi\,dx
		+ \frac{2\Delta t}{3} \int_{\mathbb{R}} \left| D^{1/2}u^{n+1/2} \right|^{2} \varphi_{x}\,dx + \frac{2\Delta t}{3} C_{S}^{2} \int_{\mathbb{R}}\left(u^{n+1/2}\right)^{2}\varphi\,dx \\
		&\quad+ \frac{\Delta t}{3} \int_{\mathbb{R}}\left(u^{n+1/2}\right)^{2}\varphi\,dx + \frac{\Delta t}{3}C^{2} \left(\int_{\mathbb{R}}\left(u^{n+1/2}\right)^{2}\varphi\,dx\right)^{2},
		\end{split}
		\end{align*}
		which again implies
		\begin{align}
		\begin{split}
		&\int_{\mathbb{R}}\left(u^{n+1}\right)^{2}\varphi\,dx + \frac{4}{3} \Delta t \int_{\mathbb{R}} \left| D^{1/2}u^{n+1/2} \right|^{2} \varphi_{x}\,dx \\
		&\le \int_{\mathbb{R}}\left(u^{n}\right)^{2}\varphi\,dx + \Delta t \, C\left[ \int_{\mathbb{R}}\left(u^{n+1/2}\right)^{2}\varphi\,dx + \left(\int_{\mathbb{R}}\left(u^{n+1/2}\right)^{2}\varphi\,dx\right)^{2} \right].
		\end{split}
		\label{stability}
		\end{align}
		Dropping the term involving the fractional derivative and writing $a_{n} = \int_{\mathbb{R}}\left(u^{n}\right)^{2}\varphi\,dx$ then gives
		\begin{equation}
		a_{n+1} \le a_{n} + \Delta t f(a_{n+\frac{1}{2}}),
		\label{diffIneq}
		\end{equation}
		with the function
		\begin{equation*}
		f(a) = C\left[a+a^{2}\right].
		\end{equation*}
		It is easily seen that $a_{n+\frac{1}{2}} \le (a_{n}+a_{n+1})/2$ and so $\{a_{n}\}$ solves the Crank--Nicolson method for the differential inequality
		\begin{equation*}
		\frac{da}{dt} \le f(a). 
		\end{equation*}
		Let us then consider the following ordinary differential equation
		\begin{equation}
		\begin{cases}
		\frac{dy}{dt} = f\left(\frac{K^{2}+1}{2}y\right), & t > 0, \\
		y(0) = a_{0},
		\end{cases}
		\label{ODE}
		\end{equation}
		where $K$ comes from Lemma \ref{lemma_onestep}.
		It is not difficult to show that this differential equation has a unique solution $y$ which blows up at some finite time $T_{\infty}$ depending only on the initial condition, and so we choose $T = T_{\infty}/2$.
		Note that the solution of this ordinary differential equation is strictly increasing and convex.
		We now compare this solution with \eqref{diffIneq} under the assumption that the CFL condition \eqref{CFL_2} holds with $Y := y(T)$.
		We claim that $a_{n} \le y(t_{n})$ for all $n \ge 0$, and argue by induction.
		As $y(0) = a_{0}$, the claim holds for $n=0$.
		Now assume that it holds for $n \in \{0,1,\dots,m\}$. As $0 \le a_{m} \le y(T)$, \eqref{CFL_2} implies that \eqref{CFL_1} holds, and thus Lemma \ref{lemma_onestep} gives $a_{m+1} \le K^{2} a_{m}$.
		Therefore we have
		\begin{equation*}
		a_{m+\frac{1}{2}} \le \frac{1}{2}(a_{m}+a_{m+1}) \le \left(\frac{K^{2}+1}{2}\right)a_{m}.
		\end{equation*}
		The convexity of $f$ then gives
		\begin{align*}
		\begin{split}
		a_{m+1} &\le a_{m} + \Delta t f\left(\frac{K^{2}+1}{2}a_{m}\right) \le y(t_{m}) + \Delta t f\left(\frac{K^{2}+1}{2}y(t_{m})\right) \\
		&\le y(t_{m}) + \Delta t \frac{dy}{dt}(t_{m}) \le y(t_{m+1}),
		\end{split}
		\end{align*}
		which proves the claim.
		Since $\varphi \ge 1$ we get the $L^{2}$-stability estimate \eqref{un_L2},
		\begin{equation*}
		\|u^{n}\|_{L^{2}(\mathbb{R})} \le \sqrt{y(T)} \le C\left(\|u_{0}\|_{L^{2}(\mathbb{R})}\right).
		\end{equation*}
		Consequently, summing over \eqref{stability} yields the estimate
		\begin{equation*}
		\Delta t \sum_{n \Delta t \le T} \int_{-R}^{R} \left|D^{1/2} u^{n+1/2}\right|^{2} \,dx \le C\left(\|u_{0}\|_{L^{2}(\mathbb{R})}\right).
		\end{equation*}
		This proves \eqref{un_H1/2} and completes the proof of Lemma \ref{lemma_H1/2_bound}.
	\end{proof}
	
	\subsection{Bounds on temporal derivative}
	We will here obtain bounds on the temporal derivative to be used later in the analysis. The following lemma will be of use.
	\begin{lemma}
		Let $\psi \in C_{c}^{\infty}\left(-R,R\right)$ and $\varphi$ be defined by properties (a)--(e) in Section \ref{preliminaries}. Then there exists a projection $P : C_{c}^{\infty}\left(-R,R\right) \rightarrow S_{\Delta x} \cap C_{c}\left(-R,R\right)$ such that
		\begin{equation*}
		\int_{\mathbb{R}} u P\left(\psi\right) \varphi \,dx = \int_{\mathbb{R}} u \psi \varphi \, dx, \quad u \in S_{\Delta x}.
		\end{equation*}
		In addition, $P$ satisfies the bounds
		\begin{equation}
		\begin{cases}
		\|P(\psi)\|_{L^{2}(\mathbb{R})} \le C \|\psi\|_{L^{2}(\mathbb{R})}, \\
		\|P(\psi)\|_{H^{1}(\mathbb{R})} \le C \|\psi\|_{H^{1}(\mathbb{R})}, \\
		\|P(\psi)\|_{H^{2}(\mathbb{R})} \le C \|\psi\|_{H^{2}(\mathbb{R})},
		\end{cases}
		\label{projection_ineq}
		\end{equation}
		where the constant $C$ is independent of $\Delta x$.
		\label{lemma_projection}
	\end{lemma}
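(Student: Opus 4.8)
The plan is to construct $P$ explicitly as the solution operator of a finite-dimensional linear system built from the weighted inner product $\langle\cdot,\cdot\rangle_{\varphi}$, and then to obtain the norm bounds in \eqref{projection_ineq} by exploiting the fact that $\varphi$ is bounded above and below by positive constants on the support of $\psi$, so that $\langle\cdot,\cdot\rangle_{\varphi}$ is equivalent to the ordinary $L^{2}$-inner product on the relevant subspace. More precisely, first I would fix $R$ and note that $\mathrm{supp}\,\psi \subset (-R,R)$, and consider the finite-dimensional space $V_{\Delta x} := \{ v \in S_{\Delta x} : \mathrm{supp}\,v \subset [-R-1,R+1]\}$ (or a similar local space containing enough degrees of freedom); since $\varphi \ge 1$, the bilinear form $(u,v)\mapsto \langle u,v\rangle_{\varphi}$ is a genuine inner product on $V_{\Delta x}$, hence defines an invertible Gram matrix. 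Define $P(\psi) \in V_{\Delta x}$ to be the unique element satisfying $\langle P(\psi), v\rangle_{\varphi} = \langle \psi, v\rangle_{\varphi}$ for all $v \in V_{\Delta x}$; by symmetry of the form this is the same as requiring $\langle u, P(\psi)\rangle_{\varphi} = \langle u, \psi\rangle_{\varphi}$ for all $u \in S_{\Delta x}$ (using that any $u \in S_{\Delta x}$ can be localized against $v$ supported in $[-R-1,R+1]$ when testing only integrals over that set—here one uses that $\varphi_{x}$ and hence the relevant integrands vanish outside). This gives exactly the stated reproducing identity, and $P(\psi) \in S_{\Delta x}\cap C_{c}(-R,R)$ by construction once the local space is chosen with compact support inside $(-R,R)$.

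For the $L^{2}$ bound, test the defining identity with $v = P(\psi)$: since $\varphi \ge 1$,
\begin{equation*}
\|P(\psi)\|_{L^{2}(\mathbb{R})}^{2} \le \langle P(\psi), P(\psi)\rangle_{\varphi} = \langle \psi, P(\psi)\rangle_{\varphi} \le \|\varphi\|_{L^{\infty}} \|\psi\|_{L^{2}} \|P(\psi)\|_{L^{2}},
\end{equation*}
and since $\|\varphi\|_{L^{\infty}} \le 2+2R$ is a constant, dividing gives $\|P(\psi)\|_{L^{2}} \le C\|\psi\|_{L^{2}}$ with $C$ independent of $\Delta x$. For the $H^{1}$ and $H^{2}$ bounds the direct variational test no longer works, because $P$ is an $L^{2}_{\varphi}$-projection, not an $H^{k}$-projection; here I would instead combine the just-established $L^{2}$-stability with the inverse inequalities \eqref{inverse_ineq} available on $S_{\Delta x}$. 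Schematically: write $P(\psi) = (P(\psi) - \Pi_{\Delta x}\psi) + \Pi_{\Delta x}\psi$ where $\Pi_{\Delta x}$ is the standard local finite-element interpolant (or the $L^{2}$-orthogonal projection $\mathcal{P}$), which is known to be stable and to have optimal approximation properties in $H^{k}$, $k=0,1,2$, on quasi-uniform meshes. The difference $P(\psi)-\Pi_{\Delta x}\psi$ lies in $S_{\Delta x}$, so its $H^{k}$-norm is controlled by $(\Delta x)^{-k}$ times its $L^{2}$-norm via \eqref{inverse_ineq}; and its $L^{2}$-norm is bounded by $\|P(\psi)-\psi\|_{L^{2}} + \|\psi - \Pi_{\Delta x}\psi\|_{L^{2}} \le C\|\psi\|_{L^{2}}$ using the first bound plus interpolation error — but that only yields a $\Delta x$-dependent constant, which is not good enough.

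The main obstacle, therefore, is precisely the $\Delta x$-independence of the $H^{1}$ and $H^{2}$ bounds, and the resolution I would pursue is a localized duality/elliptic-regularity argument rather than a crude inverse-estimate argument. Specifically, since the form $\langle\cdot,\cdot\rangle_{\varphi}$ with $\varphi$ smooth, bounded, and bounded below is spectrally equivalent to the $L^{2}$ form (with equivalence constants depending only on $\|\varphi\|_{L^{\infty}}$, hence only on $R$), the projection $P$ differs from the ordinary $L^{2}$-projection $\mathcal{P}$ onto the same local space by a bounded perturbation that is itself $\Delta x$-independent in operator norm on $L^{2}$; and the $L^{2}$-projection onto $S_{\Delta x}$ is well known to be $H^{1}$- and $H^{2}$-stable uniformly in $\Delta x$ on quasi-uniform partitions (see e.g.\ the classical finite-element literature, and in particular the stability facts underlying \cite{Ciarlet}). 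One then writes $P(\psi) = \mathcal{P}_{\varphi}\psi$ and bounds $\|P(\psi)\|_{H^{k}}$ by transferring the $H^{k}$-stability of $\mathcal{P}$ through the bounded, $\Delta x$-uniform change of inner product, using that multiplication by $\varphi$ and by $\varphi^{-1}$ are bounded on $H^{k}$ with norms depending only on $\|\varphi\|_{C^{k}}$, i.e.\ only on $R$. Assembling these observations yields \eqref{projection_ineq} with $C = C(R)$ independent of $\Delta x$, which completes the proof of Lemma \ref{lemma_projection}.
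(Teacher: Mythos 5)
The paper gives no details here: its entire proof is a one-line reference to the $L^{2}$-projection estimates in Ciarlet \cite[p.~146]{Ciarlet}. Your overall strategy---define $P$ as the $\varphi$-weighted $L^{2}$-projection onto a local spline space, obtain the $L^{2}$ bound by testing with $P(\psi)$, and obtain the $H^{1}$, $H^{2}$ bounds from projection stability---is exactly the intended adaptation, and your $L^{2}$ estimate is correct. However, your treatment of the $H^{1}$ and $H^{2}$ bounds has a genuine gap, and, ironically, the argument you abandon in your middle paragraph is the correct one. Writing $P(\psi)=\bigl(P(\psi)-\Pi_{\Delta x}\psi\bigr)+\Pi_{\Delta x}\psi$ and applying the inverse inequality \eqref{inverse_ineq} to the first term, what you need is not $L^{2}$-stability but the \emph{approximation error} estimate
\begin{equation*}
\|P(\psi)-\Pi_{\Delta x}\psi\|_{L^{2}(\mathbb{R})}\le \|P(\psi)-\psi\|_{L^{2}(\mathbb{R})}+\|\psi-\Pi_{\Delta x}\psi\|_{L^{2}(\mathbb{R})}\le C(\Delta x)^{k}\|\psi\|_{H^{k}(\mathbb{R})},\quad k=1,2,
\end{equation*}
and this is available: since $1\le\varphi\le 2+2R$, the weighted projection is quasi-optimal in the unweighted $L^{2}$-norm,
\begin{equation*}
\|\psi-P(\psi)\|_{L^{2}(\mathbb{R})}^{2}\le\langle\psi-P(\psi),\psi-P(\psi)\rangle_{\varphi}\le\langle\psi-\chi,\psi-\chi\rangle_{\varphi}\le(2+2R)\|\psi-\chi\|_{L^{2}(\mathbb{R})}^{2}
\end{equation*}
for every $\chi$ in the trial space, and the standard estimates give $\inf_{\chi}\|\psi-\chi\|_{L^{2}}\le C(\Delta x)^{k}\|\psi\|_{H^{k}}$. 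The gain of $(\Delta x)^{k}$ exactly cancels the $(\Delta x)^{-k}$ from the inverse inequality, yielding a $\Delta x$-independent constant. You discarded this route only because you bounded $\|P(\psi)-\psi\|_{L^{2}}$ by $C\|\psi\|_{L^{2}}$ instead of by $C(\Delta x)^{k}\|\psi\|_{H^{k}}$. The substitute argument in your final paragraph does not close the gap as stated: the fact that $P$ and the unweighted projection $\mathcal{P}$ are both uniformly bounded on $L^{2}$, or even that $P-\mathcal{P}$ is small in $L^{2}$ operator norm, does not by itself transfer $H^{k}$-stability from $\mathcal{P}$ to $P$; any quantitative transfer again proceeds by estimating $\|(P-\mathcal{P})\psi\|_{L^{2}}$ to order $(\Delta x)^{k}\|\psi\|_{H^{k}}$ and invoking the inverse inequality, i.e.\ the same mechanism you need anyway.

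A secondary issue is the support constraint. You place the trial space in $[-R-1,R+1]$, which is incompatible with the requirement $P(\psi)\in C_{c}(-R,R)$, and the assertion that Galerkin orthogonality against the compactly supported local space yields the reproducing identity for \emph{all} $u\in S_{\Delta x}$ needs justification, since restrictions of global splines form a strictly larger space than the compactly supported ones. The repair is to take the trial space to consist of those basis splines whose supports lie in a fixed compact subinterval of $(-R,R)$ containing the support of $\psi$ with a buffer of a few elements; then, for any $u\in S_{\Delta x}$, the partial sum of its local basis expansion over basis functions meeting that subinterval belongs to the trial space and agrees with $u$ on the support of $P(\psi)-\psi$, which gives the identity for all $u\in S_{\Delta x}$ while keeping $P(\psi)$ supported in $(-R,R)$. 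None of this changes your architecture, but both points must be made precise for the lemma as stated.
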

	\begin{proof}[Proof of Lemma \ref{lemma_projection}]
		The proof is a straightforward adaptation of the $L^{2}$-projection results found in the monograph of Ciarlet \cite[p. 146]{Ciarlet}.
	\end{proof}
	
	In our upcoming estimates we also need the following Sobolev inequality.
	Given $v \in H^{1}(\mathbb{R})$, we have
	\begin{equation}
	\|v\|_{L^{\infty}(\mathbb{R})} \le \|v\|_{H^{1}(\mathbb{R})}.
	\label{L_infty_by_H1}
	\end{equation}
	
	From the definitions of the dual norms in $H^{-2}(\mathbb{R})$ and $H^{-2}([-R,R])$ we have the inequalities
	\begin{equation}
	\int_{-R}^{R} u v\,dx \le \|u\|_{H^{-2}([-R,R])} \|v\|_{H^{2}([-R,R])}
	\label{dual_Holder}
	\end{equation}
	for $u \in H^{-2}([-R,R])$, $v \in H^{2}([-R,R])$, and
	\begin{equation}
	\int_{\mathbb{R}} u v\,dx \le \|u\|_{H^{-2}(\mathbb{R})} \|v\|_{H^{2}(\mathbb{R})}
	\label{dual_Holder_full}
	\end{equation}
	for $u \in H^{-2}(\mathbb{R})$, $v \in H^{2}(\mathbb{R})$.
	
	The above relations together with Lemma \ref{lemma_projection} is used to prove the following lemma regarding the boundedness of the temporal derivatives of the approximate solutions.
	\begin{lemma}
		Let $\{u^{n}\}$ be the solution of the scheme \eqref{CN_element} and assume that the hypothesis of Lemma \ref{lemma_H1/2_bound} holds. Then we have the following estimate
		\begin{align}
		\| D^{+}_{t} u^{n}\varphi\|_{H^{-2}([-R,R])} &\le C\left(\|u_{0}\|_{L^{2}(\mathbb{R})}, R\right),
		\label{un_H-2}
		\end{align}
		where $D^{+}_{t} u^{n}$ is the forward time difference operator
		\begin{equation*}
		D^{+}_{t} u^{n} = \frac{u^{n+1}-u^{n}}{\Delta t}.
		\end{equation*}
		\label{lemma_H-2_bound}
	\end{lemma}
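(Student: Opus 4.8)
The plan is to estimate the dual norm $\|D^+_t u^n \varphi\|_{H^{-2}([-R,R])}$ by testing against an arbitrary $\psi \in C_c^\infty(-R,R)$ with $\|\psi\|_{H^2([-R,R])} \le 1$, using the defining equation \eqref{CN_element}. The key observation is that the projection $P$ from Lemma \ref{lemma_projection} lets us convert the weighted pairing $\langle D^+_t u^n, \psi \varphi\rangle$, which involves a general test function, into $\langle D^+_t u^n, P(\psi)\varphi\rangle$ with $P(\psi) \in S_{\Delta x}$, so that \eqref{CN_element} becomes applicable. Concretely, I would first write
\begin{equation*}
\int_{-R}^{R} D^+_t u^n \, \psi \varphi \, dx = \left\langle \frac{u^{n+1}-u^n}{\Delta t}, \psi \varphi\right\rangle = \left\langle \frac{u^{n+1}-u^n}{\Delta t}, P(\psi)\varphi\right\rangle,
\end{equation*}
where the last equality uses the defining property of $P$ together with the fact that $u^{n+1}-u^n \in S_{\Delta x}$. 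Then, with $v = P(\psi)$ in \eqref{CN_element}, I get
\begin{equation*}
\left\langle \frac{u^{n+1}-u^n}{\Delta t}, P(\psi)\varphi\right\rangle = \left\langle \frac{(u^{n+1/2})^2}{2}, (\varphi P(\psi))_x \right\rangle - \left\langle \mathcal{H}(u^{n+1/2})_x, (\varphi P(\psi))_x\right\rangle.
\end{equation*}

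Next I would bound each of the two terms on the right. For the nonlinear term, expanding $(\varphi P(\psi))_x = \varphi_x P(\psi) + \varphi P(\psi)_x$ and using that $\varphi, \varphi_x$ are bounded with $\varphi_x$ compactly supported, Cauchy--Schwarz gives a bound by $C(R)\|u^{n+1/2}\|_{L^4(\mathbb{R})}^2 \|P(\psi)\|_{H^1(\mathbb{R})}$; then $\|u^{n+1/2}\|_{L^4}^2 \le C\|u^{n+1/2}\|_{L^2}\|u^{n+1/2}\|_{H^{1/2}}$ via \eqref{embedding} and \eqref{interpolation}, but since we only control $H^{1/2}$ \emph{locally} I would instead use the Gagliardo--Nirenberg-type bound $\|u^{n+1/2}\|_{L^4(\mathbb{R})}^2 \le C\|u^{n+1/2}\|_{L^2(\mathbb{R})}^2 + C\|u^{n+1/2}\|_{L^2(\mathbb{R})}^{3/2}$-type control, or more simply observe that the full-line $L^4$ norm is controlled by the $L^2$ and $H^1$ norms, and fall back on $\|u^{n+1/2}\|_{H^1}$ if needed---however, since $u^{n+1/2} \in S_{\Delta x}$ with a $\Delta x$-dependent $H^1$ bound, the clean route is to keep everything in terms of quantities already bounded: use $\|u^{n+1/2}\|_{L^2(\mathbb{R})} \le C(\|u_0\|_{L^2})$ from \eqref{un_L2} and the inverse inequality \eqref{inverse_ineq} is \emph{not} wanted here. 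The correct approach is to bound $\|u^{n+1/2}\|_{L^4(\mathbb{R})}$ purely by the $L^2$ norm is impossible; instead note $\langle (u^{n+1/2})^2, (\varphi P(\psi))_x\rangle \le \|u^{n+1/2}\|_{L^2}^2 \|(\varphi P(\psi))_x\|_{L^\infty}$ and use \eqref{L_infty_by_H1} together with $\|(\varphi P(\psi))_x\|_{H^1(\mathbb{R})} \le C(R)\|P(\psi)\|_{H^2(\mathbb{R})} \le C(R)\|\psi\|_{H^2} \le C(R)$, giving a bound $C(R)\|u_0\|_{L^2}^2$. For the dispersive term, integrate by parts to write $\langle \mathcal{H}(u^{n+1/2})_x, (\varphi P(\psi))_x\rangle = -\langle u^{n+1/2}, \mathcal{H}((\varphi P(\psi))_x)_x\rangle = -\langle u^{n+1/2}, \mathcal{H}(\varphi P(\psi))_{xx}\rangle$ using Lemma \ref{Hilbert_props}(i),(ii), then Cauchy--Schwarz and the $L^2$-isometry (iii) give a bound by $\|u^{n+1/2}\|_{L^2}\|(\varphi P(\psi))_{xx}\|_{L^2} \le C(R)\|u_0\|_{L^2}\|P(\psi)\|_{H^2} \le C(R)\|u_0\|_{L^2}$.

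Combining these, the right-hand side is bounded by $C(\|u_0\|_{L^2}, R)$ uniformly in $\psi$ with $\|\psi\|_{H^2([-R,R])} \le 1$, and taking the supremum over such $\psi$ yields \eqref{un_H-2}. The main obstacle I anticipate is handling the nonlinear term without invoking a $\Delta x$-dependent bound: one must be careful to route the estimate through $\|(\varphi P(\psi))_x\|_{L^\infty(\mathbb{R})} \le \|(\varphi P(\psi))_x\|_{H^1(\mathbb{R})}$ and the $\Delta x$-independent projection bounds \eqref{projection_ineq}, rather than trying to put derivatives onto $u^{n+1/2}$. A secondary technical point is verifying that $\varphi P(\psi) \in H^2(\mathbb{R})$ genuinely (so that \eqref{CN_element} applies with $v = P(\psi)$, recalling that the test functions in \eqref{CN_element} are in $S_{\Delta x} \subset H^2(\mathbb{R})$ and the bilinear form uses $\varphi v$), and that the support of $P(\psi)$ stays within $(-R,R)$ so $\varphi_x$ can be freely bounded on that support—both of which follow from the construction of $P$ in Lemma \ref{lemma_projection}.
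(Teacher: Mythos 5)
Your proposal is correct and follows essentially the same route as the paper: test with $\psi\in C_c^{\infty}(-R,R)$, use the projection $P$ of Lemma \ref{lemma_projection} together with the fact that $D^{+}_{t}u^{n}\in S_{\Delta x}$ to reduce to $v=P(\psi)$ in \eqref{CN_element}, bound the nonlinear term by $\|u^{n+1/2}\|_{L^{2}(\mathbb{R})}^{2}\,\|(\varphi P(\psi))_{x}\|_{L^{\infty}}$ via \eqref{L_infty_by_H1} and \eqref{projection_ineq}, and handle the dispersive term by one integration by parts plus the $L^{2}$-isometry of $\mathcal{H}$ (whether the isometry is applied to $\mathcal{H}u^{n+1/2}$ or to $\mathcal{H}(\varphi P(\psi))_{xx}$ is immaterial). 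The digression through $L^{4}$ estimates in your write-up is unnecessary, but the route you ultimately commit to is exactly the paper's argument.
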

	\begin{proof}[Proof of Lemma \ref{lemma_H-2_bound}]
		Start by rewriting \eqref{CN_element} as
		\begin{equation}
		\left< D^{+}_{t} u^{n}, \varphi v \right> = \left< \frac{\left(u^{n+1/2}\right)^{2}}{2}, (\varphi v)_{x} \right> - \left< \mathcal{H}u^{n+1/2}_{x}, (\varphi v)_{x} \right>,
		\label{CN_temporal}
		\end{equation}
		which holds for all $v \in S_{\Delta x}$. Let $\psi \in C_{c}^{\infty}(-R,R)$ and set $v = P(\psi)$, where $P$ is the projection from Lemma \ref{lemma_projection}, to get
		\begin{equation*}
		\left< D^{+}_{t} u^{n}, \varphi P(\psi) \right> = \left< \frac{\left(u^{n+1/2}\right)^{2}}{2}, (\varphi P(\psi))_{x} \right> - \left< \mathcal{H}u^{n+1/2}_{x}, (\varphi P(\psi))_{x} \right>.
		\end{equation*}
		Using \eqref{L_infty_by_H1}, \eqref{projection_ineq} and \eqref{un_L2} we estimate the first term on the right-hand side as follows
		\begin{align*}
		\begin{split}
		&\int_{\mathbb{R}} \left(u^{n+1/2}\right)^{2} (\varphi P(\psi))_{x} \,dx \\
		&\le \left(\|P(\psi)\|_{L^{\infty}([-R,R])} + \|P(\psi)_{x}\|_{L^{\infty}([-R,R])}(2+2R)\right) \int_{-R}^{R} \left(u^{n+1/2}\right)^{2}\,dx \\
		&\le \left(\|P(\psi)\|_{H^{1}([-R,R])} + \|P(\psi)_{x}\|_{H^{1}([-R,R])}(2+2R)\right) \left\|u^{n+1/2}\right\|_{L^{2}(\mathbb{R})}^{2} \\
		&\le C\left(\|u_{0}\|_{L^{2}(\mathbb{R})}, R\right) \|\psi\|_{H^{2}([-R,R])}.
		\end{split}
		\end{align*}
		The second term can be estimated as
		\begin{align*}
		\begin{split}
		-\int_{\mathbb{R}} \mathcal{H}u^{n+1/2}_{x} (\varphi P(\psi))_{x} \,dx &\le \left\|\mathcal{H}u^{n+1/2}\right\|_{L^{2}(\mathbb{R})} \left\|(\varphi P(\psi))_{xx}\right\|_{L^{2}([-R,R])} \\
		&\le \left\|u^{n+1/2}\right\|_{L^{2}(\mathbb{R})} \|\varphi P(\psi)\|_{H^{2}([-R,R])} \\
		&\le  C\left(\left\|u_{0}\right\|_{L^{2}(\mathbb{R})}, R\right) \|\psi\|_{H^{2}([-R,R])},
		\end{split}
		\end{align*}
		where we have used \eqref{un_L2}, \eqref{projection_ineq} and the $L^{2}$-isometry of the Hilbert transform.
		
		Together this gives
		\begin{equation*}
		\left| \int_{-R}^{R} D^{+}_{t} u^{n} \varphi \psi \,dx \right| = \left| \int_{-R}^{R} D^{+}_{t} u^{n} \varphi P(\psi) \,dx \right| \le C\left(\left\|u_{0}\right\|_{L^{2}(\mathbb{R})}, R\right) \|\psi\|_{H^{2}([-R,R])},
		\end{equation*}
		which implies
		\begin{equation*}
		\left\|D^{+}_{t} u^{n} \varphi\right\|_{H^{-2}([-R,R])} \le C\left(\left\|u_{0}\right\|_{L^{2}(\mathbb{R})}, R\right),
		\end{equation*}
		and the estimate is proven.
		
		If $\psi \in C_{c}^{\infty}(\mathbb{R})$ then $P(\psi) \in S_{\Delta x}$.
		By the exact same arguments as above, but this time on $\mathbb{R}$ instead of $[-R,R]$, we get
		\begin{equation}
		\|D^{+}_{t} u^{n} \varphi \|_{H^{-2}(\mathbb{R})} \le C\left(\left\|u_{0}\right\|_{L^{2}(\mathbb{R})}, R\right).
		\label{un_H-2_full}
		\end{equation}
	\end{proof}
	
	\subsection{Convergence to a weak solution}
	Prior to stating our theorem of convergence we define the weak solution of the Cauchy problem \eqref{BO} in the following way.
	\begin{definition}
		Let $Q > 0$ and $u_{0} \in L^{2}(\mathbb{R})$. Then $u \in L^{2}\left(0,T; H^{1/2}(-Q,Q)\right)$ is a weak solution of \eqref{BO} in the region $(-Q,Q)\times\left[0,T\right)$ if
		\begin{equation}
		\int_{0}^{T} \int_{\mathbb{R}} \left( \phi_{t} u + \phi_{x} \frac{u^{2}}{2} - \left(\mathcal{H}\phi_{xx}\right)u \right) dx\,dt + \int_{\mathbb{R}} \phi(x,0) u_{0}(x)\,dx = 0,
		\label{weak_sol}
		\end{equation}
		for all $\phi \in C_{c}^{\infty}\left( \left(-Q,Q\right)\times\left[0,T\right) \right)$.
	\end{definition}
	
	Now we define the approximate solution $u^{\Delta x} \in S_{\Delta x}$, which will be shown to converge to a weak solution of \eqref{BO}, by the interpolation formula
	\begin{equation}
	u^{\Delta x}(x,t) = 
	\begin{cases}
	u^{n-1/2}(x) + (t-t_{n-1/2}) D^{+}_{t}u^{n-1/2}(x), & t \in [t_{n-1/2}, t_{n+1/2}), n \ge 1, \\
	u^{0}(x) + 2t \frac{u^{1/2}(x)-u^{0}(x)}{\Delta t}, & t \in [0, t_{1/2}).
	\end{cases}
	\label{approx}
	\end{equation}
	We then have the following convergence theorem, which is the main result of the paper.
	\begin{theorem}
		Let $\left\{u^{n}\right\}_{n \in \mathbb{N}}$ be a sequence of functions defined by the scheme \eqref{CN_element} and assume that $\left\|u_{0}\right\|_{L^{2}(\mathbb{R})}$ is finite. Assume furthermore that $\Delta t = \mathcal{O}(\Delta x^{2})$. Then there exist a positive time $T$ and a constant $C$, depending only on $R$ and $\left\|u_{0}\right\|_{L^{2}(\mathbb{R})}$ such that
		\begin{align}
		\left\| u^{\Delta x} \right\|_{L^{\infty}(0,T;L^{2}([-R,R]))} &\le C \left(\left\|u_{0}\right\|_{L^{2}(\mathbb{R})},R\right), \label{bound_L2}\\
		\left\| u^{\Delta x} \right\|_{L^{2}(0,T;H^{1/2}([-R,R]))} &\le C \left(\left\|u_{0}\right\|_{L^{2}(\mathbb{R})},R\right), \label{bound_H1/2}\\
		\left\| \partial_{t} u^{\Delta x} \varphi \right\|_{L^{2}(0,T;H^{-2}([-R,R]))} &\le C \left(\left\|u_{0}\right\|_{L^{2}(\mathbb{R})}, R\right), \label{bound_H-2}
		\end{align}
		where $u^{\Delta x}$ is given by \eqref{approx}. Moreover, there exists a sequence $\{\Delta x_{j}\}_{j=1}^{\infty}$ and a function $u \in L^{2}\left(0,T; L^{2}([-R,R])\right)$ such that
		\begin{equation}
		u^{\Delta x_{j}} \to u \,\,\text{strongly in}\,\, L^{2}\left(0,T; L^{2}([-R,R])\right),
		\label{strong_convergence}
		\end{equation}
		as $\Delta x_{j} \xrightarrow{j \to \infty} 0$. The function $u$ is a weak solution of the Cauchy problem for \eqref{BO}, which is to say that it satisfies \eqref{weak_sol} with $Q = R-1$.
		\label{theorem_convergence}
	\end{theorem}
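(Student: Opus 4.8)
The plan is to combine the uniform bounds already available with a compactness argument in the spirit of Aubin--Lions. First I would establish the three estimates \eqref{bound_L2}--\eqref{bound_H-2} for the interpolant $u^{\Delta x}$. The bound \eqref{bound_L2} follows by noting that on each subinterval $[t_{n-1/2},t_{n+1/2})$ the function $u^{\Delta x}(\cdot,t)$ is a convex combination of $u^{n-1/2}$ and $u^{n+1/2}$, so $\|u^{\Delta x}(\cdot,t)\|_{L^2([-R,R])} \le \max\{\|u^{n-1/2}\|_{L^2(\mathbb{R})},\|u^{n+1/2}\|_{L^2(\mathbb{R})}\}$, which is controlled by \eqref{un_L2}; the initial subinterval is handled the same way using $\|u^0\|_{L^2(\mathbb{R})} \le \|u_0\|_{L^2(\mathbb{R})}$ and $\|u^{1/2}\|_{L^2(\mathbb{R})} \le K\|u^0\|_{L^2(\mathbb{R})}$ from Lemma \ref{lemma_onestep}. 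For \eqref{bound_H1/2} I would again use the convexity of $u^{\Delta x}$ in $t$ on each subinterval together with the triangle inequality for the $H^{1/2}$-seminorm and then integrate in time, converting the sum $\Delta t \sum \|D^{1/2}u^{n+1/2}\|_{L^2([-R,R])}^2$ from \eqref{un_H1/2} into the time integral; the extra $L^2$-part of the $H^{1/2}$-norm is absorbed by \eqref{bound_L2}. For \eqref{bound_H-2} observe that $\partial_t u^{\Delta x} = D^+_t u^{n-1/2}$ is piecewise constant in $t$, so $\|\partial_t u^{\Delta x}\varphi\|^2_{L^2(0,T;H^{-2}([-R,R]))} = \Delta t\sum \|D^+_t u^{n-1/2}\varphi\|^2_{H^{-2}([-R,R])}$, and each term is bounded by Lemma \ref{lemma_H-2_bound}; summing over the $\mathcal{O}(1/\Delta t)$ time steps gives a bound depending only on $T$, $R$ and $\|u_0\|_{L^2(\mathbb{R})}$.

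With these bounds in hand I would extract a convergent subsequence. The bounds \eqref{bound_H1/2} and \eqref{bound_H-2} on $u^{\Delta x}\varphi$ (or on $u^{\Delta x}$ restricted to $[-R,R]$, using that $\varphi \ge 1$ there) place the family in a set that is compact in $L^2(0,T;L^2([-R,R]))$ by the Aubin--Lions--Simon lemma, since $H^{1/2}([-R,R]) \hookrightarrow\hookrightarrow L^2([-R,R]) \hookrightarrow H^{-2}([-R,R])$ with the first embedding compact. This yields a sequence $\Delta x_j \to 0$ and a limit $u \in L^2(0,T;L^2([-R,R]))$ with $u^{\Delta x_j} \to u$ strongly in $L^2(0,T;L^2([-R,R]))$, which is \eqref{strong_convergence}; by the weak lower semicontinuity of the $H^{1/2}$-norm, $u$ also inherits the bound \eqref{bound_H1/2}, so $u \in L^2(0,T;H^{1/2}(-Q,Q))$ with $Q = R-1$.

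It remains to pass to the limit in the weak formulation. Fix $\phi \in C_c^\infty((-Q,Q)\times[0,T))$ with $Q = R-1$; note that $\phi$ and its $x$-derivatives are then supported where $\varphi_x = 1$, i.e. where $\varphi(x) = 1 + (x+R)$ is affine, so $\varphi\phi_{xx} = (\varphi\phi)_{xx}$ up to lower-order terms that can be tracked explicitly. I would take $v = \mathcal{P}(\phi(\cdot,t_{n+1/2})/\varphi)$ in \eqref{CN_element} — or more precisely use the projection $P$ of Lemma \ref{lemma_projection} — multiply by $\Delta t$, sum over $n$, and perform summation by parts in $n$ to move the time-difference onto a discrete time-derivative of $\phi$. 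Then I would replace the discrete objects by $u^{\Delta x_j}$ plus errors: the strong $L^2$-convergence handles the linear term $\phi_t u$ directly, the quadratic term $\phi_x u^2/2$ is handled because strong $L^2$-convergence together with the uniform $L^\infty_t L^2_x$ bound \eqref{bound_L2} gives $(u^{\Delta x_j})^2 \to u^2$ in $L^1(0,T;L^1([-R,R]))$, and the dispersive term $-(\mathcal{H}\phi_{xx})u$ is handled using skew-symmetry of $\mathcal{H}$ (Lemma \ref{Hilbert_props}(i)) to move $\mathcal{H}$ onto the smooth test function, after which strong $L^2$-convergence of $u^{\Delta x_j}$ against the fixed function $\mathcal{H}\phi_{xx}$ suffices. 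The consistency errors — those from the Crank--Nicolson midpoint rule, from the interpolation \eqref{approx} versus the grid values, from the projection $\mathcal{P}$ or $P$, and from the discrepancy between $\varphi\phi_{xx}$ and $(\varphi\phi)_{xx}$ — all carry a factor of $\Delta t$ or $\Delta x$ against norms controlled by the uniform bounds, and hence vanish as $j \to \infty$; the initial-data term $\int \phi(x,0)u_0\,dx$ appears from the boundary term of the summation by parts using $u^0 = \mathcal{P}u_0 \to u_0$ in $L^2$. The main obstacle is this last step: bookkeeping the nonlocal dispersive term and the projection/cut-off errors simultaneously, making sure that every error term genuinely carries a vanishing factor and that the skew-symmetry manipulation is legitimate for the low-regularity approximations (which is why one works against the smooth $\phi$ rather than against $u^{\Delta x_j}$), while also checking that the support condition $Q = R-1$ really does make $\varphi$ affine on the relevant region so that the commutator $[\mathcal{H}(-\partial_{xx}),\varphi]$ contributions seen in Section \ref{preliminaries} do not reappear in the limiting identity.
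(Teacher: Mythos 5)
Your overall route is the same as the paper's: establish the three bounds from Lemmas \ref{lemma_H1/2_bound} and \ref{lemma_H-2_bound}, apply the Aubin--Lions--Simon lemma to $\varphi u^{\Delta x}$, and pass to the limit in the projected weak formulation, moving $\mathcal{H}$ onto the smooth test function. However, there is one concrete gap in your argument for \eqref{bound_H1/2}. On the first subinterval $[0,t_{1/2})$ the interpolant \eqref{approx} involves $u^{0}$ itself, not a half-integer step, and $\|D^{1/2}u^{0}\|_{L^{2}([-R,R])}$ is \emph{not} controlled by the smoothing estimate \eqref{un_H1/2}, which only bounds $\Delta t\sum_n\|D^{1/2}u^{n+1/2}\|^{2}$. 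Since $u^{0}=\mathcal{P}u_{0}$ with $u_{0}$ merely in $L^{2}(\mathbb{R})$, the quantity $\|D^{1/2}u^{0}\|_{L^{2}}$ can degenerate as $\Delta x\to 0$; the paper controls the resulting contribution $\Delta t\,\|D^{1/2}u^{0}\|^{2}_{L^{2}}\le \Delta t\,C\|u^{0}\|^{2}_{H^{1}}$ by the inverse inequality \eqref{inverse_ineq}, giving a factor $\Delta t/\Delta x^{2}$, which is bounded precisely because of the hypothesis $\Delta t=\mathcal{O}(\Delta x^{2})$. Your proposal never invokes this hypothesis anywhere, which is the tell-tale sign of the omission: your "conversion of the sum into the time integral" silently drops the $u^{0}$ term. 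The same mechanism (inverse inequality plus the time-step restriction, yielding $\Delta t/\sqrt{\Delta x}=\mathcal{O}(\Delta x)$ after $\|u^{n-1/2}\|_{L^{\infty}}\le C\Delta x^{-1/2}\|u^{n-1/2}\|_{L^{2}}$) is also what makes the Crank--Nicolson/interpolation consistency errors in the quadratic term vanish, so your blanket claim that these errors "carry a factor of $\Delta t$ or $\Delta x$" needs this quantitative input to actually close.

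A smaller point: your worry about $\varphi$ being affine on the support of $\phi$ and about the commutator $[\mathcal{H}(-\partial_{xx}),\varphi]$ reappearing in the limit is misplaced. The commutator identity is used only for the a priori $H^{1/2}$ estimate; in the limit passage one simply tests with $\varphi v$, integrates by parts to write the dispersive term as $-\langle u,\mathcal{H}(\varphi v)_{xx}\rangle$, obtains \eqref{weak_phi} for the limit $u$, and only then substitutes $v=\phi/\varphi$ so that $\varphi v=\phi$ exactly. No structure of $\varphi$ beyond smoothness, positivity and the lower bound $\varphi\ge 1$ is needed, and the choice $Q=R-1$ serves only to keep the test function inside the region where the projection estimate \eqref{projection_diff} and the cut-off behave uniformly.
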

	
	\begin{remark}
		The convergent subsequence $u^{\Delta x_{j}}$ in Theorem \ref{theorem_convergence} can be used as a constructive proof of existence of solutions to \eqref{BO}, as noted in Section \ref{introduction}.
		On the other hand, owing to the well-posedness for initial data $u_{0} \in L^2(\mathbb{R})$ \cite{Ionescu07} we can in fact conclude that the whole sequence converges as $\Delta x \to 0$.
	\end{remark}
	
	\begin{proof}[Proof of Theorem \ref{theorem_convergence}]
		Assume for simplicity that $T = (N+\frac{1}{2})\Delta t$ for some $N \in \mathbb{N}$.
		For $t \in [t_{n-1/2}, t_{n+1/2})$ we have
		\begin{equation*}
		u^{\Delta x}(x,t) = (1-\alpha_{n}(t)) u^{n-1/2}(x) + \alpha_{n}(t) u^{n+1/2}(x),
		\end{equation*}
		where $\alpha_{n} = (t-t_{n-1/2})/\Delta t \in [0,1)$.
		For $t \in [t_{n-1/2}, t_{n+1/2}), n = 1,2,\dots,N$ one then has
		\begin{align*}
		\begin{split}
		\left\|u^{\Delta x}\right\|_{L^{2}(\mathbb{R})} &\le \|u^{n-1/2}\|_{L^{2}(\mathbb{R})} + \|u^{n+1/2}\|_{L^{2}(\mathbb{R})} \le C\left(\|u_{0}\|_{L^{2}(\mathbb{R})}\right),
		\end{split}
		\end{align*}
		while for $t \in [0, t_{1/2})$ we have
		\begin{equation*}
		\begin{split}
		\left\|u^{\Delta x}\right\|_{L^{2}(\mathbb{R})} \le \|1-(2t)/\Delta t\|_{\infty} \|u^{0}\|_{L^{2}(\mathbb{R})} + \|(2t)/\Delta t\|_{\infty} \|u^{1/2}\|_{L^{2}(\mathbb{R})} \le C\left(\|u_{0}\|_{L^{2}(\mathbb{R})}\right),
		\end{split}
		\end{equation*}
		which proves \eqref{bound_L2}.
		
		Next we have
		\begin{align*}
		\begin{split}
		&\int_{0}^{T} \left\| D^{1/2}u^{\Delta x}\right\|^{2}_{L^{2}([-R,R])}\,dt \\
		&\le 2 \left\|D^{1/2} u^{0}\right\|^{2}_{L^{2}([-R,R])} \int_{0}^{t_{1/2}} \left(1-\frac{2t}{\Delta t}\right)^{2}dt \\
		&\quad+ 2 \left\|D^{1/2}u^{1/2}\right\|^{2}_{L^{2}([-R,R])} \int_{0}^{t_{1/2}} \left(\frac{2t}{\Delta t}\right)^{2}dt \\
		&\quad+ 2 \sum_{n=1}^{N} \left\|D^{1/2} u^{n-1/2}\right\|^{2}_{L^{2}([-R,R])} \int_{t_{n-1/2}}^{t_{n+1/2}} (1-\alpha_{n}(t))^{2} \,dt \\
		&\quad+ 2 \sum_{n=1}^{N} \left\|D^{1/2}u^{n+1/2}\right\|^{2}_{L^{2}([-R,R])} \int_{t_{n-1/2}}^{t_{n+1/2}} (\alpha_{n}(t))^{2} \,dt \\
		&\le \Delta t \,C\left\|u^{0}\right\|^{2}_{H^{1}([-R,R])} + 2 \Delta t \sum_{n=0}^{N} \left\|D^{1/2}u^{n+1/2}\right\|^{2}_{L^{2}([-R,R])} \\
		&= \Delta t \,C\left\|u^{0}\right\|^{2}_{L^{2}([-R,R])} + \Delta t \,C\left\|u^{0}_{x}\right\|^{2}_{L^{2}([-R,R])} + 2 \Delta t \sum_{n=0}^{N} \left\|D^{1/2}u^{n+1/2}\right\|^{2}_{L^{2}([-R,R])},
		\end{split}
		\end{align*}
		where in the last inequality we have used that $\left\|u^{0}\right\|_{H^{1/2}([-R,R])} \le C \left\|u^{0}\right\|_{H^{1}([-R,R])}$ due to Sobolev embedding.
		In the last line we apply \eqref{un_H1/2} to the sum, and the inverse equality \eqref{inverse_ineq} combined with the assumption $\Delta t \le C\Delta x^2$ to the second term to conclude that \eqref{bound_H1/2} holds.
		
		For the third relation, note that
		\begin{align*}
		\begin{split}
		u^{\Delta x}_{t} &= \begin{cases}
		D^{+}_{t} u^{n-1/2}, & (x,t) \in \mathbb{R} \times [t_{n-1/2}, t_{n+1/2}), \\
		\frac{u^{1/2}-u^{0}}{\Delta t/2}, & (x,t) \in \mathbb{R} \times [0, t_{1/2}),
		\end{cases} \\
		&= \begin{cases}
		\frac{D^{+}_{t}u^{n} + D^{+}_{t}u^{n-1}}{2}, & (x,t) \in \mathbb{R} \times [t_{n-1/2}, t_{n+1/2}), \\
		D^{+}_{t} u^{0}, & (x,t) \in \mathbb{R} \times [0, t_{1/2}).
		\end{cases}
		\end{split}
		\end{align*}
		Using \eqref{un_H-2} it is then easy to show that \eqref{bound_H-2} holds.
		If one instead uses \eqref{un_H-2_full} and considers the whole real line $\mathbb{R}$, the same argument leads to
		\begin{equation}
		\|\partial_{t} u^{\Delta x} \varphi\|_{L^{2}(0,T;H^{-2}(\mathbb{R}))} \le C\left( \left\|u_{0}\right\|_{L^{2}(\mathbb{R})}, R \right).
		\label{bound_H-2_full}
		\end{equation}
		
		Also, as $\varphi$ is a positive and bounded smooth function, \eqref{bound_L2} and \eqref{bound_H1/2} give
		\begin{subequations}
			\begin{align}
			\|\varphi u^{\Delta x}\|_{L^{\infty}(0,T;L^{2}([-R,R]))} \le C\left(\|u_{0}\|_{L^{2}(\mathbb{R})}, R\right), \label{bound_L2_phi}\\
			\|\varphi u^{\Delta x}\|_{L^{2}(0,T;H^{1/2}([-R,R]))} \le C\left(\|u_{0}\|_{L^{2}(\mathbb{R})}, R\right). \label{bound_H1/2_phi}
			\end{align}
		\end{subequations}
		
		Based on the bounds \eqref{bound_L2_phi}, \eqref{bound_H1/2_phi} and \eqref{bound_H-2} we apply the Aubin--Simon compactness lemma \cite[Lemma 4.4]{Holden2015} to the set $\{\varphi u^{\Delta x}\}$ to prove the existence of a sequence $\{\Delta x_{j}\}_{j \in \mathbb{N}}$ such that $\Delta x_{j} \to 0$, and a function $\tilde{u}$ such that
		\begin{equation}
		\varphi u^{\Delta x_{j}} \to \tilde{u} \,\, \text{strongly in} \,\, L^{2}(0,T;L^{2}([-R,R])),
		\label{phi_convergence}
		\end{equation}
		as $j$ approaches infinity. As $\varphi \ge 1$, \eqref{phi_convergence} implies that there exists $u$ such that \eqref{strong_convergence} holds.
		The strong convergence allows passage to the limit in the nonlinearity.
		
		Now it remains to prove that $u$ indeed is a weak solution of \eqref{BO}. In the following we consider the standard $L^{2}$-projection of a function $\psi$ with $k+1$ continuous derivatives into the space $S_{\Delta x}$ for some $k \in \mathbb{N}_{0}$, denoted by $\mathcal{P}$.
		That is,
		\begin{equation*}
		\int_{\mathbb{R}} \left(\mathcal{P}\psi(x)-\psi(x)\right)v(x) \,dx = 0, \quad v \in S_{\Delta x}.
		\end{equation*}
		For the above projection we have
		\begin{equation}
		\|\psi(x) - \mathcal{P}\psi(x)\|_{H^{k}(\mathbb{R})} \le C \Delta x \|\psi\|_{H^{k+1}(\mathbb{R})},
		\label{projection_diff}
		\end{equation}
		where the constant $C$ is independent of $\Delta x$. For a proof of the above estimate, see Ciarlet \cite[p. 133]{Ciarlet}.
		
		Note also that from \eqref{CN_temporal} we have for $v \in S_{\Delta x}$ and $n \ge 1$,
		\begin{align*}
		\begin{split}
		\left< D^{+}_{t} u^{n}, \varphi v \right> &- \left< \frac{\left(u^{n+1/2}\right)^{2}}{2}, (\varphi v)_{x} \right> + \left< \mathcal{H}u^{n+1/2}_{x}, (\varphi v)_{x} \right> = 0, \\
		\left< D^{+}_{t} u^{n-1}, \varphi v \right> &- \left< \frac{\left(u^{n-1/2}\right)^{2}}{2}, (\varphi v)_{x} \right> + \left< \mathcal{H}u^{n-1/2}_{x}, (\varphi v)_{x} \right> = 0.
		\end{split}
		\end{align*}
		Averaging the two relations gives
		\begin{align*}
		\begin{split}
		\mathcal{G}_{n}(\varphi v) := \left< D^{+}_{t} u^{n-1/2}, \varphi v \right> &- \frac{1}{2} \left< \frac{\left(u^{n+1/2}\right)^{2} + \left(u^{n-1/2}\right)^{2}}{2}, (\varphi v)_{x} \right> \\
		&+ \left< \mathcal{H}\left( \frac{u^{n+1/2} + u^{n-1/2}}{2} \right)_{x}, (\varphi v)_{x} \right> = 0.
		\end{split}
		\end{align*}
		
		We will start by showing that
		\begin{equation}
		\int_{0}^{T} \int_{\mathbb{R}} \left( u_{t}^{\Delta x} \varphi v - \frac{\left(u^{\Delta x}\right)^{2}}{2} (\varphi v)_{x} - \left(\mathcal{H}u^{\Delta x}\right) (\varphi v)_{xx} \right) dx\,dt = \mathcal{O}(\Delta x),
		\label{weak_sol_2}
		\end{equation}
		for any test function $v \in C_{c}^{\infty}((-R+1,R-1) \times[0,T) )$ and $\varphi$ as defined by properties (a)--(e) in Section \ref{preliminaries}.
		We proceed as follows.
		\begin{align*}
		\begin{split}
		&\int_{0}^{T} \int_{\mathbb{R}} \left( u_{t}^{\Delta x} \varphi v - \frac{\left(u^{\Delta x}\right)^{2}}{2} (\varphi v)_{x} - \left(\mathcal{H}u^{\Delta x}\right) (\varphi v)_{xx} \right) dx\,dt \\
		&= \int_{0}^{t_{1/2}} \int_{\mathbb{R}} \left( u_{t}^{\Delta x} \varphi v - \frac{\left(u^{\Delta x}\right)^{2}}{2} (\varphi v)_{x} - \left(\mathcal{H}u^{\Delta x}\right) (\varphi v)_{xx} \right) dx\,dt \\
		&\quad+ \sum_{n=1}^{N} \int_{t_{n-1/2}}^{t_{n+1/2}} \int_{\mathbb{R}} \left( u_{t}^{\Delta x} \varphi v - \frac{\left(u^{\Delta x}\right)^{2}}{2} (\varphi v)_{x} - \left(\mathcal{H}u^{\Delta x}\right) (\varphi v)_{xx} \right) dx \,dt \\
		&=: \mathcal{C}^{\Delta x} + \mathcal{E}^{\Delta x}.
		\end{split}
		\end{align*}
		Now let $v^{\Delta x} = \mathcal{P}v$ and observe that we may write
		\begin{align*}
		\begin{split}
		\mathcal{C}^{\Delta x} &= \int_{0}^{t_{1/2}} \underbrace{\int_{\mathbb{R}} \left( \left(D^{+}_{t}u^{0}\right)\varphi v^{\Delta x} - \frac{\left(u^{1/2}\right)^{2}}{2} \left(\varphi v^{\Delta x}\right)_{x} + \left(\mathcal{H}u^{1/2}\right)_{x} \left(\varphi v^{\Delta x}\right)_{x} \right) dx}_{\text{$=0$ by \eqref{CN_element}}} dt \\
		&\quad+ \underbrace{\int_{0}^{t_{1/2}} \int_{\mathbb{R}} \left(D^{+}_{t}u^{0}\right) \varphi \left(v - v^{\Delta x}\right)\,dx \,dt}_{\mathcal{C}^{\Delta x}_{1}} \underbrace{- \int_{0}^{t_{1/2}} \int_{\mathbb{R}} \frac{\left(u^{1/2}\right)^{2}}{2} \left(\varphi (v - v^{\Delta x}) \right)_{x} \, dx \,dt}_{\mathcal{C}^{\Delta x}_{2}} \\
		&\quad \underbrace{- \int_{0}^{t_{1/2}} \int_{\mathbb{R}} \left[\frac{\left(u^{\Delta x}\right)^{2}}{2} - \frac{\left(u^{1/2}\right)^{2}}{2}\right] (\varphi v)_{x} \, dx \,dt}_{\mathcal{C}^{\Delta x}_{3}} \\
		&\quad \underbrace{- \int_{0}^{t_{1/2}} \int_{\mathbb{R}} \left(\mathcal{H}u^{1/2}\right) \left(\varphi (v-v^{\Delta x}) \right)_{xx} \,dx\,dt }_{\mathcal{C}^{\Delta x}_{4}} \\
		&\quad \underbrace{- \int_{0}^{t_{1/2}} \int_{\mathbb{R}} \left(\mathcal{H}\left(u^{\Delta x} - u^{1/2}\right)\right) (\varphi v)_{xx} \,dx \,dt }_{\mathcal{C}_{5}^{\Delta x}}, 
		\end{split}
		\end{align*}
		and
		\begin{align*}
		\begin{split}
		\mathcal{E}^{\Delta x} &= \sum_{n=1}^{N} \int_{t_{n-1/2}}^{t_{n+1/2}} \underbrace{\mathcal{G}_{n}(\varphi v^{\Delta x})}_{=0} \,dt + \underbrace{\sum_{n=1}^{N} \int_{t_{n-1/2}}^{t_{n+1/2}} \int_{\mathbb{R}} \left(D^{+}_{t}u^{n-1/2}\right) \varphi \left(v-v^{\Delta x}\right)\,dx\,dt}_{\mathcal{E}^{\Delta x}_{1}} \\
		&\quad \underbrace{ - \sum_{n=1}^{N} \int_{t_{n-1/2}}^{t_{n+1/2}} \int_{\mathbb{R}} \frac{1}{2} \frac{\left(u^{n+1/2}\right)^{2}+\left(u^{n-1/2}\right)^{2}}{2} \left(\varphi (v-v^{\Delta x})\right)_{x} \, dx\,dt}_{\mathcal{E}^{\Delta x}_{2}} \\
		&\quad \underbrace{ -\sum_{n=1}^{N} \int_{t_{n-1/2}}^{t_{n+1/2}} \int_{\mathbb{R}} \frac{1}{2} \left[\left(u^{\Delta x}\right)^{2} - \frac{\left(u^{n+1/2}\right)^{2} + \left(u^{n-1/2}\right)^{2}}{2}\right] (\varphi v)_{x} \,dx\,dt }_{\mathcal{E}^{\Delta x}_{3}} \\
		&\quad \underbrace{- \sum_{n=1}^{N} \int_{t_{n-1/2}}^{t_{n+1/2}} \int_{\mathbb{R}} \left(\mathcal{H}\frac{u^{n+1/2} + u^{n-1/2}}{2}\right) \left(\varphi (v-v^{\Delta x}) \right)_{xx} dx \,dt }_{\mathcal{E}^{\Delta x}_{4}} \\
		&\quad \underbrace{- \sum_{n=1}^{N} \int_{t_{n-1/2}}^{t_{n+1/2}} \int_{\mathbb{R}} \left(\mathcal{H}\left(u^{\Delta x} - \frac{u^{n+1/2} + u^{n-1/2}}{2}\right)\right) (\varphi v)_{xx}\,dx\,dt }_{\mathcal{E}^{\Delta x}_{5}}.
		\end{split}
		\end{align*}
		Now we estimate the preceding terms.
		From \eqref{dual_Holder}, \eqref{projection_diff} and \eqref{bound_H-2} we obtain
		\begin{align*}
		\begin{split}
		\mathcal{C}^{\Delta x}_{1} + \mathcal{E}^{\Delta x}_{1} &= \int_{0}^{T} \int_{-R}^{R} \partial_{t} u^{\Delta x} \varphi \left(v-v^{\Delta x}\right)\,dx\,dt \\
		&\le \int_{0}^{T} \left\| \partial_{t}u^{\Delta x} \varphi \right\|_{H^{-2}([-R,R])} \left\|v-v^{\Delta x}\right\|_{H^{2}([-R+1,R-1])}\,dt \\
		&\le \Delta x \, C\left(\left\|u_{0}\right\|_{L^{2}(\mathbb{R})}, R\right) \left\|v\right\|_{L^{2}(0,T; H^{3}([-R+1,R-1]))} \xrightarrow{\Delta x \to 0} 0.
		\end{split}
		\end{align*}
		
		From \eqref{un_L2}, \eqref{L_infty_by_H1} and \eqref{projection_diff} we get
		\begin{align*}
		\begin{split}
		\mathcal{C}^{\Delta x}_{2} + \mathcal{E}^{\Delta x}_{2} &\le \frac{1}{2} \int_{0}^{t_{1/2}} \int_{-R+1}^{R-1} \left|u^{1/2}\right|^{2} \left|\left(\varphi(v-v^{\Delta x})\right)_{x}\right|\,dx\,dt \\
		&\quad+ \frac{1}{4} \sum_{n=1}^{N} \int_{t_{n-1/2}}^{t_{n+1/2}} \int_{-R+1}^{R-1} \left( \left|u^{n+1/2}\right|^{2} + \left|u^{n-1/2}\right|^{2} \right) \left|\left(\varphi(v-v^{\Delta x})\right)_{x}\right|\,dx\,dt \\
		&\le C(R) \int_{0}^{t_{1/2}} \|v-v^{\Delta x} \|_{H^{2}([-R+1,R-1])} \|u^{1/2}\|_{L^{2}([-R,R])}\,dt \\
		&\quad+ C(R) \sum_{n=1}^{N} \int_{t_{n-1/2}}^{t_{n+1/2}} \|v-v^{\Delta x} \|_{H^{2}([-R+1,R-1])} \|u^{n+1/2}\|_{L^{2}([-R,R])} \,dt \\
		&\quad+ C(R) \sum_{n=1}^{N} \int_{t_{n-1/2}}^{t_{n+1/2}} \|v-v^{\Delta x} \|_{H^{2}([-R+1,R-1])} \|u^{n-1/2}\|_{L^{2}([-R,R])} \,dt \\
		&\le \Delta x\, C\left(\left\|u_{0}\right\|_{L^{2}(\mathbb{R})}, R\right) \left\|v\right\|_{L^{\infty}(0,T; H^{3}([-R+1,R-1]))} \xrightarrow{\Delta x \to 0} 0.
		\end{split}
		\end{align*}
		
		The next terms may be rewritten as
		\begin{align*}
		\begin{split}
		\mathcal{C}_{3}^{\Delta x} + \mathcal{E}^{\Delta x}_{3} &= \frac{\Delta t}{4}\int_{0}^{t_{1/2}} \int_{-R+1}^{R+1} \left( u^{1/2} + u^{0} \right) \left(D^{+}_{t} u^{0}\right) (\varphi v)_{x}\,dx\,dt \\
		&\quad- \int_{0}^{t_{1/2}} \int_{-R+1}^{R+1} \left( u^{0} t \left(D^{+}_{t}u^{0}\right) (\varphi v)_{x} + \frac{1}{2} t^{2} \left(D^{+}_{t}u^{0}\right)^{2} (\varphi v)_{x} \right) dx\,dt \\
		&\quad+ \frac{\Delta t}{4} \sum_{n=1}^{N} \int_{t_{n-1/2}}^{t_{n+1/2}} \int_{-R+1}^{R-1} \left( u^{n+1/2} + u^{n-1/2} \right) \left(D^{+}_{t}u^{n-1/2}\right) (\varphi v)_{x}\,dx\,dt \\
		&\quad- \sum_{n=1}^{N} \int_{t_{n-1/2}}^{t_{n+1/2}} \int_{-R+1}^{R-1} u^{n-1/2} (t-t_{n-1/2}) \left(D^{+}_{t}u^{n-1/2}\right) (\varphi v)_{x} \,dx\,dt \\
		&\quad- \frac{1}{2} \sum_{n=0}^{N} \int_{t_{n-1/2}}^{t_{n+1/2}} \int_{-R+1}^{R-1} (t-t_{n-1/2})^{2} (D^{+}_{t}u^{n-1/2})^{2} (\varphi v)_{x} \,dx\,dt. \\
		\end{split}
		\end{align*}
		Using \eqref{dual_Holder}, \eqref{un_H-2}, \eqref{inverse_ineq} and \eqref{un_L2} we have the estimate
		\begin{align*}
		\begin{split}
		& \int_{0}^{t_{1/2}} \int_{-R+1}^{R-1} u^{0} (D^{+}_{t}u^{0}) (\varphi v)_{x} \,dx\,dt \\
		&\quad+ \sum_{n=1}^{N} \int_{t_{n-1/2}}^{t_{n+1/2}} \int_{-R+1}^{R-1} u^{n-1/2} (D^{+}_{t}u^{n-1/2}) (\varphi v)_{x} \,dx\,dt \\
		&\le \int_{0}^{t_{1/2}} \left\|u^{0}\right\|_{L^{\infty}([-R,R])} \left\| D^{+}_{t}u^{0} \varphi \right\|_{H^{-2}([-R,R])} \left\| \varphi v \right\|_{H^{3}([-R+1,R-1])} \,dt \\
		&\quad+ \sum_{n=1}^{N} \int_{t_{n-1/2}}^{t_{n+1/2}} \left\|u^{n-1/2}\right\|_{L^{\infty}([-R,R])} \left\| D^{+}_{t}u^{n-1/2} \varphi \right\|_{H^{-2}([-R,R])} \left\| \varphi v \right\|_{H^{3}([-R+1,R-1])} dt \\
		&\le C\left(\left\|u_{0}\right\|_{L^{2}(\mathbb{R})}, R\right) \int_{0}^{t_{1/2}} \frac{C}{\sqrt{\Delta x}}\left\|u^{0}\right\|_{L^{2}(\mathbb{R})} \left\| \varphi v \right\|_{H^{3}([-R+1,R-1])} \,dt \\
		&\quad+ C\left(\left\|u_{0}\right\|_{L^{2}(\mathbb{R})}, R\right) \sum_{n=1}^{N} \int_{t_{n-1/2}}^{t_{n+1/2}} \frac{C}{\sqrt{\Delta x}} \left\|u^{n-1/2}\right\|_{L^{2}(\mathbb{R})} \left\| \varphi v \right\|_{H^{3}([-R+1,R-1])} \,dt \\
		&\le \frac{1}{\sqrt{\Delta x}} C\left(\left\|u_{0}\right\|_{L^{2}(\mathbb{R})}, R\right) \|v\|_{L^{\infty}(0,T;H^{3}([-R+1,R-1]))}.
		\end{split}
		\end{align*}
		Similarly we get
		\begin{align*}
		\begin{split}
		& \int_{0}^{t_{1/2}} \int_{-R+1}^{R-1} u^{1/2} (D^{+}_{t}u^{0}) (\varphi v)_{x} \,dx\,dt \\
		&\quad+ \sum_{n=1}^{N} \int_{t_{n-1/2}}^{t_{n+1/2}} \int_{-R+1}^{R-1} u^{n+1/2} (D^{+}_{t}u^{n-1/2}) (\varphi v)_{x} \,dx\,dt \\
		&\le \frac{1}{\sqrt{\Delta x}} C\left(\left\|u_{0}\right\|_{L^{2}(\mathbb{R})}, R\right) \left\|v \right\|_{L^{\infty}(0,T; H^{3}([-R+1,R-1]))}.
		\end{split}
		\end{align*}
		Furthermore we have
		\begin{align*}
		\begin{split}
		&\left| \int_{0}^{t_{1/2}} \int_{-R+1}^{R+1} u^{0} t \left(D^{+}_{t}u^{0}\right) (\varphi v)_{x} \,dx\,dt \right| \\
		&\quad+ \left| \int_{t_{n-1/2}}^{t_{n+1/2}} \int_{-R+1}^{R-1} u^{n-1/2} (t-t_{n-1/2}) \left(D^{+}_{t}u^{n-1/2}\right) (\varphi v)_{x} \,dx\,dt \right| \\
		&\le \frac{\Delta t}{2} \int_{0}^{t_{1/2}} \left\| u^{0} \right\|_{L^{\infty}([-R+1,R-1])} \int_{-R+1}^{R-1} \left| D^{+}_{t}u^{0} \right| |(\varphi v)_{x}| \,dx\,dt \\
		&\quad+ \Delta t \int_{t_{n-1/2}}^{t_{n+1/2}} \left\| u^{n-1/2} \right\|_{L^{\infty}([-R+1,R-1])} \int_{-R+1}^{R-1} \left| D^{+}_{t}u^{n-1/2} \right| |(\varphi v)_{x}| \,dx\,dt,
		\end{split}
		\end{align*}
		and
		\begin{align*}
		\begin{split}
		& \left| \int_{0}^{t_{1/2}} \int_{-R+1}^{R-1} t^{2} \left(D^{+}_{t}u^{0}\right)^{2} (\varphi v)_{x} \,dx\,dt \right| \\
		&\quad+ \left|\int_{t_{n-1/2}}^{t_{n+1/2}} \int_{-R+1}^{R-1} (t-t_{n-1/2})^{2} (D^{+}_{t}u^{n-1/2})^{2} (\varphi v)_{x} \,dx\,dt \right| \\
		&\le \frac{\Delta t}{2} \int_{t_{n-1/2}}^{t_{n+1/2}} \left\| u^{1/2} - u^{0} \right\|_{L^{\infty}([-R+1,R-1])} \int_{-R+1}^{R-1} \left| D^{+}_{t}u^{0} \right| |(\varphi v)_{x}| \,dx\,dt \\
		&\quad+ \Delta t \int_{t_{n-1/2}}^{t_{n+1/2}} \left\| u^{n+1/2} - u^{n-1/2} \right\|_{L^{\infty}([-R+1,R-1])} \int_{-R+1}^{R-1} \left| D^{+}_{t}u^{n-1/2} \right| |(\varphi v)_{x}| \,dx\,dt.
		\end{split}
		\end{align*}
		Thus these terms can be estimated like the preceding ones, and as $\Delta t / \sqrt{\Delta x} = \mathcal{O}(\Delta x)$ we obtain
		\begin{equation*}
		\mathcal{C}_{3}^{\Delta x} + \mathcal{E}^{\Delta x}_{3} \xrightarrow{\Delta x \to 0} 0.
		\end{equation*}
		
		Using the $L^{2}$-isometry of the Hilbert transform, \eqref{un_L2} and \eqref{projection_diff} we obtain
		\begin{align*}
		\begin{split}
		\mathcal{C}^{\Delta x}_{4} + \mathcal{E}^{\Delta x}_{4} &\le \int_{0}^{t_{1/2}} \left\|\mathcal{H}u^{1/2}\right\|_{L^{2}(\mathbb{R})} \left\|\left(\varphi (v-v^{\Delta x})\right)_{xx}\right\|_{L^{2}([-R+1,R-1])} \,dt \\
		&\quad+ \frac{1}{2}\sum_{n=1}^{N} \int_{t_{n-1/2}}^{t_{n+1/2}} \left\|\mathcal{H}u^{n+1/2}\right\|_{L^{2}(\mathbb{R})} \left\|\left(\varphi (v-v^{\Delta x})\right)_{xx}\right\|_{L^{2}([-R+1,R-1])} \,dt \\
		&\quad+ \frac{1}{2}\sum_{n=1}^{N} \int_{t_{n-1/2}}^{t_{n+1/2}} \left\|\mathcal{H}u^{n-1/2}\right\|_{L^{2}(\mathbb{R})} \left\|\left(\varphi (v-v^{\Delta x})\right)_{xx}\right\|_{L^{2}([-R+1,R-1])} \,dt \\
		&\le C\left(\left\|u_{0}\right\|_{L^{2}(\mathbb{R})}, R\right) \int_{0}^{T} \left\|v-v^{\Delta x}\right\|_{H^{2}([-R+1,R-1])} \,dt \\
		&= \Delta x \, C\left(\left\|u_{0}\right\|_{L^{2}(\mathbb{R})}, R\right) \left\|v\right\|_{L^{\infty}(0,T; H^{3}([-R+1,R-1]))} \xrightarrow{\Delta x \to 0} 0.
		\end{split}
		\end{align*}
		
		Finally, from \eqref{dual_Holder_full}, the $H^{2}$-isometry of the Hilbert transform and \eqref{bound_H-2_full} we have the estimate
		\begin{align*}
		\begin{split}
		\mathcal{C}^{\Delta x}_{5} + \mathcal{E}^{\Delta x}_{5} &= \int_{0}^{t_{1/2}} \int_{\mathbb{R}} \Delta t \left(-\frac{1}{2} + \frac{t}{\Delta t}\right) (D^{+}_{t} u^{0}) (\mathcal{H}(\varphi v)_{xx})\,dx\,dt \\
		&\quad+ \sum_{n=1}^{N} \int_{t_{n-1/2}}^{t_{n+1/2}} \int_{\mathbb{R}} \Delta t \left(-\frac{1}{2} + \frac{t-t_{n-1/2}}{\Delta t}\right) (D^{+}_{t}u^{n-1/2}) (\mathcal{H}(\varphi v)_{xx}) \,dx\,dt \\
		&\le \frac{\Delta t}{2} \int_{0}^{T} \int_{\mathbb{R}} \left|\partial_{t}u^{\Delta x} \varphi \right| \left|\mathcal{H}(\varphi v)_{xx}\right|\,dx\,dt \\
		&\le \Delta t \, C\left(\|u_{0}\|_{L^{2}(\mathbb{R})}, R\right) \left\|\varphi v\right\|_{L^{2}(0,T; H^{4}([-R+1,R-1]))} \xrightarrow{\Delta t \to 0} 0.
		\end{split}
		\end{align*}
		
		We combine the preceding estimates to conclude that \eqref{weak_sol_2} holds. Also, observe that by passing $\Delta x \rightarrow 0$ we obtain
		\begin{equation}
		\int_{0}^{T} \int_{\mathbb{R}} \left( u_{t} \varphi v - \frac{u^{2}}{2} (\varphi v)_{x} - \left(\mathcal{H}u\right) (\varphi v)_{xx} \right) dx\,dt = 0,
		\label{weak_phi}
		\end{equation}
		for any test function $v \in C^{\infty}_{c}\left([-R+1,R-1] \times [0,T)\right)$.
		Now we choose $v = \phi / \varphi$ in \eqref{weak_phi} with $\phi \in C^{\infty}_{c}\left([-R+1,R-1]\times [0,T)\right)$ and integrate by parts to conclude that \eqref{weak_sol} holds, that is
		\begin{equation*}
		\int_{0}^{T} \int_{\mathbb{R}} \left( \phi_{t} u + \phi_{x} \frac{u^{2}}{2} - \left(\mathcal{H}\phi_{xx}\right)u \right) dx\,dt + \int_{\mathbb{R}} \phi(x,0) u_{0}(x)\,dx = 0.
		\end{equation*}
		This concludes the proof of Theorem \ref{theorem_convergence}.
	\end{proof}
	
	\section{Numerical experiments}
	\label{experiments}
	Here we present some numerical experiments to illustrate the fully discrete scheme \eqref{CN_element}.
	We stress that this is not an exposition of a numerical method for the problem on the real line in the usual sense as these typically are based on applying a numerical scheme for the periodic version of the problem on a sufficiently large domain such that the reference solutions to are close to zero outside of it, see e.g.\ the experimental sections of \cite{Thomee1998, Dutta2015difference, Dougalis2015}.
	As this paper concerns the convergence of the discretized real line problem only, we find such an approach irrelevant to the main result presented here.
	In our illustration we will also consider a discretized domain which is large enough for the reference solutions to be close to zero outside of it, but we use the full line Hilbert transform where we simply neglect the contributions from outside the domain since the reference solution is nearly zero there.
	We still have to close our system of equations and this we do by imposing a periodic boundary condition by requiring the approximation to take the same value at both ends of the domain.
	A detailed presentation of our setting will follow.
	
	Inspired by \cite{Dutta2016} we define $S_{\Delta x}$ in the following way.
	Let $f$ and $g$ be the functions
	\begin{align*}
	f(y) &= \begin{cases}
	1 + y^2 (2|y|-3), & |y| \le 1, \\
	0, & |y| > 1,
	\end{cases} \\
	g(y) &= \begin{cases}
	y(1-|y|)^2, & |y| \le 1, \\
	0, & |y| > 1.
	\end{cases}
	\end{align*}
	For $j \in \mathbb{Z}$ we define the basis functions
	\begin{align*}
	v_{2j} = f\left(\frac{x-x_{j}}{\Delta x}\right), \quad
	v_{2j+1} = g\left(\frac{x-x_{j}}{\Delta x}\right),
	\end{align*}
	where $x_{j} = j\Delta x$.
	Then $\{v_{j}\}_{-M}^{M}$ spans a $4M+2$ dimensional subspace of $H^{2}(\mathbb{R})$.
	In the following we define $N := 2M$, which is the number of elements used in the approximation.
	We have chosen the weight function $\varphi$ to be $120 + x$ so that it is a positive, increasing function for which the derivative equals 1 on the domain considered, which in this case is the interval $[-100,100]$.
	
	In our experiments we have chosen to set $\Delta t = \mathcal{O}(\Delta x)$ contrary to the assertion $\Delta t = \mathcal{O}(\Delta x^{2})$ from the theory, as smaller time steps did not lead to significant improvement in the convergence rates of the approximations.
	In the iteration \eqref{iteration} to obtain $u^{n+1}$ we chose the stopping condition $\|w^{\ell+1}-w^{\ell}\|_{L^{2}} \le 0.002 \Delta x \|u^{n}\|_{L^{2}}$, which typically required 4--7 iterations for the cruder discretisations and 2--3 iterations for the finer ones.
	On each element, $\left<\mathcal{H}(v_{j})_{x},(v_{i})_{x}\right>$ was evaluated by applying a seven point Gauss--Legendre (GL) quadrature rule to the principal value integral defining $\mathcal{H}(v_{j})_{x}$ in eight GL points, followed by applying the eight point GL rule to the inner product.
	To avoid problems near the singular point, the principal value integral was evaluated using a \textit{subtracted Hilbert transform} method \cite[p. 685]{King2009vol1}.
	
	For $t = n\Delta t$, we set $u^{\Delta x}(x,t) = u^{n}(x,t) = \sum_{j=-M}^{M} u_{j}^{n} v_{j}(x)$.
	We have measured the relative error of the numerical approximation compared to the exact solution $u$,
	\begin{equation*}
	E := \frac{\|u^{\Delta x} - u\|_{L^{2}}}{\|u\|_{L^{2}}},
	\end{equation*}
	where the $L^{2}$-norms were computed with the trapezoidal rule in the points $x_{j}$ of the finest grid under consideration.
	
	As mentioned in the introduction, the BO equation admits an infinite number of conserved quantities \cite{Abdelouhab1989}, and the first three in this hierarchy are
	\begin{align}
	Q_1(u) &= \int_{\mathbb{R}}u\,dx,
	\label{mass} \\
	Q_2(u) &= \int_{\mathbb{R}}\frac{u^2}{2}\,dx,
	\label{momentum} \\
	Q_3(u) &= \int_{\mathbb{R}}\left(\frac{u^3}{3}-u\mathcal{H}u_{x}\right)dx
	\label{energy},
	\end{align}
	typically denoted \emph{mass}, \emph{momentum} and \emph{energy}, respectively.
	Numerical methods which preserve more of the conserved quantities for completely integrable partial differential equations have been observed to give more accurate approximations than the ones preserving fewer.
	Therefore we have computed the relative change $I_{n}(u^{\Delta x}(t)) := (Q_{n}(u^{\Delta x}(t))-Q_{n}(u^{\Delta x}(0)))/Q_{n}(u^{\Delta x}(0))$ for $n = 1,2,3$
	to see how well these quantities were conserved in our discretization.
	We have also included the rate of convergence for $E$, defined for each intermediate step between element numbers $N_1$ and $N_2$ as
	\begin{equation*}
	\frac{\ln(E(N_1))-\ln(E(N_2))}{\ln(N_2)-\ln(N_1)},
	\end{equation*}
	where we now see $E$ as a function of the element number $N$.
	
	In our example we consider a solution to \eqref{BO} presented in \cite{Thomee1998}, namely
	\begin{equation}
	u_{s2}(x,t) = \frac{4c_{1}c_{2} \left( c_{1}\lambda_{1}^{2} + c_{2}\lambda_{2}^{2} + (c_{1}+c_{2})^{2}c_{1}^{-1}c_{2}^{-1}(c_{1}-c_{2})^{-2} \right)}{\left( c_{1}c_{2}\lambda_{1}\lambda_{2}-(c_{1}+c_{2})^{2}(c_{1}-c_{2})^{-2} \right)^{2} + \left( c_{1}\lambda_{1} + c_{2}\lambda_{2} \right)^{2}},
	\label{twoSol}
	\end{equation}
	where $\lambda_{1} := x - c_{1}t - d_{1}$ and $\lambda_{2} := x - c_{2}t - d_{2}$. When $c_{2} > c_{1}$ and $d_{1} > d_{2}$ this expression represents a tall soliton overtaking a smaller one while moving to the right.
	We applied the fully discrete scheme with initial data $u_{0}(x) = u_{s2}(x,0)$ and parameters $c_{1} = 0.3$, $c_{2} = 0.6$, $d_{1} = -30$ and $d_{2} = -55$.
	The time step was set to $\Delta t = 0.5 \|u_{0}\|_{L^{\infty}}^{-1} \Delta x$ and the numerical solutions were computed for $t = 90$ and $t = 180$, that is during and after the taller soliton overtaking the smaller one.
	To approximate the full line problem we set the domain to $[-100,100]$ with the aforementioned periodic boundary condition.
	The results are presented in Table \ref{twosol_norms_Hper} and a comparison between the approximation for $N = 256$ and the exact solution is shown in Figure \ref{twosolBO_per}.
	\begin{figure}[!ht]
		\centering
		\includegraphics[width=4in]{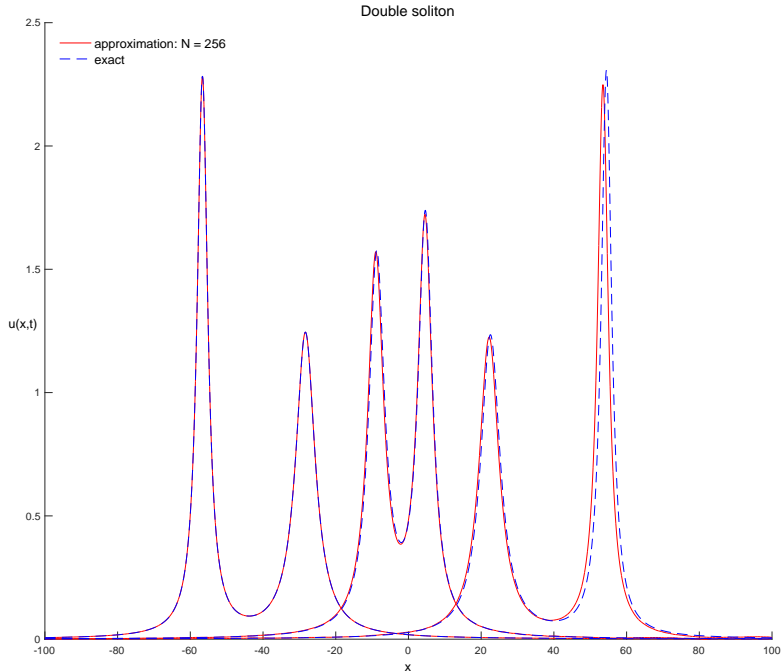}
		\caption{Numerical approximation for $N=256$ and exact solution for $t=0$, $90$ and $180$, respectively positioned from left to right in the plot, for initial data $u_{s2}$.}
		\label{twosolBO_per}
	\end{figure}
	
	\begin{table}[!ht]
		\centering
		\begin{tabular}{ccccccc}
			\toprule
			$t$ & $N$ & $E$ & $\text{rate}_E$ & $I_1$ & $I_2$ & $I_3$ \\
			\midrule
			90 & 128 & 0.01844 & \multirow{2}{*}{-1.45} & \num[round-precision=3,round-mode=figures,scientific-notation=true]{3.788168436539606e-05} & \num[round-precision=3,round-mode=figures,scientific-notation=true]{-7.054039330335533e-04} & \num[round-precision=3,round-mode=figures,scientific-notation=true]{0.005857957549774} \\
			& 256 & 0.05021 & \multirow{2}{*}{1.58} & \num[round-precision=3,round-mode=figures,scientific-notation=true]{-6.607626720852172e-06} & \num[round-precision=3,round-mode=figures,scientific-notation=true]{-0.003852706468442} & \num[round-precision=3,round-mode=figures,scientific-notation=true]{0.016473967969450} \\
			& 512 & 0.01678 & \multirow{2}{*}{0.68} & \num[round-precision=3,round-mode=figures,scientific-notation=true]{-6.644527694261283e-06} & \num[round-precision=3,round-mode=figures,scientific-notation=true]{-9.961675597184954e-04} & \num[round-precision=3,round-mode=figures,scientific-notation=true]{0.004699414586217} \\
			& 1024 & 0.01044 & \multirow{2}{*}{1.16} & \num[round-precision=3,round-mode=figures,scientific-notation=true]{-4.644220296650651e-06} & \num[round-precision=3,round-mode=figures,scientific-notation=true]{3.624401082430920e-04} & \num[round-precision=3,round-mode=figures,scientific-notation=true]{-0.001570622968568} \\
			& 2048 & 0.00467 & \multirow{2}{*}{0.08} & \num[round-precision=3,round-mode=figures,scientific-notation=true]{-3.253423596329705e-06} & \num[round-precision=3,round-mode=figures,scientific-notation=true]{4.156830723175385e-05} & \num[round-precision=3,round-mode=figures,scientific-notation=true]{-5.7061827662940769e-06} \\
			& 4096 & 0.00442 & & \num[round-precision=3,round-mode=figures,scientific-notation=true]{-2.289292275657248e-06} & \num[round-precision=3,round-mode=figures,scientific-notation=true]{4.117044513286984e-06} & \num[round-precision=3,round-mode=figures,scientific-notation=true]{2.073479448870165e-04} \\
			\midrule
			180 & 128 & 0.11959 & \multirow{2}{*}{-1.32} & \num[round-precision=3,round-mode=figures,scientific-notation=true]{1.567825876098693e-04} &  \num[round-precision=3,round-mode=figures,scientific-notation=true]{-6.445923992238868e-04} & \num[round-precision=3,round-mode=figures,scientific-notation=true]{-0.015620294780048} \\
			& 256 & 0.29755 & \multirow{2}{*}{1.75} & \num[round-precision=3,round-mode=figures,scientific-notation=true]{2.481473438135974e-06} & \num[round-precision=3,round-mode=figures,scientific-notation=true]{-0.007795784122829} & \num[round-precision=3,round-mode=figures,scientific-notation=true]{0.033172223688126} \\
			& 512 & 0.08869 & \multirow{2}{*}{0.74} & \num[round-precision=3,round-mode=figures,scientific-notation=true]{-3.758682808565987e-06} & \num[round-precision=3,round-mode=figures,scientific-notation=true]{-0.002415782562417} & \num[round-precision=3,round-mode=figures,scientific-notation=true]{0.011194348955560} \\
			& 1024 & 0.05295 & \multirow{2}{*}{2.35} & \num[round-precision=3,round-mode=figures,scientific-notation=true]{-2.690529846120332e-06} & \num[round-precision=3,round-mode=figures,scientific-notation=true]{9.216569366899723e-04} & \num[round-precision=3,round-mode=figures,scientific-notation=true]{-0.004108082688061} \\
			& 2048 & 0.01040 & \multirow{2}{*}{0.89} & \num[round-precision=3,round-mode=figures,scientific-notation=true]{-1.823173546730924e-06} & \num[round-precision=3,round-mode=figures,scientific-notation=true]{1.161171710542706e-04} & \num[round-precision=3,round-mode=figures,scientific-notation=true]{-4.743854583524921e-04} \\
			& 4096 & 0.00561 & & \num[round-precision=3,round-mode=figures,scientific-notation=true]{-1.264599672777445e-06} &  \num[round-precision=3,round-mode=figures,scientific-notation=true]{1.461678425453621e-05} & \num[round-precision=3,round-mode=figures,scientific-notation=true]{-1.673757454734165e-05} \\
			\bottomrule
		\end{tabular}
		\caption{Relative $L^{2}$-error, $I_1$, $I_2$ and $I_3$ at $t=90$ and $t=180$ for initial data $u_{s2}$.}
		\label{twosol_norms_Hper}
	\end{table}
	
	The latter shows that the shape and position of the numerical approximation for $N = 256$ agrees quite well with the exact solution.
	Still, we observe that for $t = 180$ there is a visible error in the height of the tallest soliton, which has introduced a small phase error in the approximation.
	Since the soliton is very narrow this causes a relative error of 30\%, as seen from Table \ref{twosol_norms_Hper}.
	This error is actually larger than the error for $N = 128$ where the approximate solution has pronounced oscillations.
	Still, the phase error for $N=128$ is much smaller than for $N = 256$, and since the solitons are so narrow this has a much larger impact on the relative $L^{2}$-norm of the error.
	Thus an element number of $N = 128$ or less seems to be too small to get consistent reduction in the errors for increasing $N$.
	We see that for $N$ greater than or equal to 256, the relative $L^{2}$-error at both $t = 90$ and $t = 180$ decreases, but not in a systematic fashion.
	Regarding the conserved quantities, we see that these are preserved quite well in the approximation, and increasing the number of elements tends to improve how well they are preserved, most notably for \eqref{momentum} and \eqref{energy} 
	
	The lack of systematic reduction for the error is most likely caused by our approximation of the full line problem by restricting to a finite interval whilst still using the full line Hilbert transform.
	It should also be pointed out that this is a complicated example, as one has to approximate the nonlinear interaction between two passing solitons.
	Finally, we refer to \cite{proceedings2016} for a study of theoretical convergence rates for this scheme and a modified scheme aimed at the periodic version of \eqref{BO}, given sufficiently regular initial data.
	
	\section*{Acknowledgments} The author is tremendously grateful to Helge Holden and Nils Henrik Risebro for their support and helpful feedback throughout this work.
	Gratitude is also expressed toward the referee for valuable remarks which contributed to improve this paper.
	
	
	\providecommand{\href}[2]{#2}
	\providecommand{\arxiv}[1]{\href{http://arxiv.org/abs/#1}{arXiv:#1}}
	\providecommand{\url}[1]{\texttt{#1}}
	\providecommand{\urlprefix}{URL }

	\medskip
	Received xxxx 20xx; revised xxxx 20xx.
	\medskip
	
\end{document}